\newcommand{\R}{\mathbb{R}}
\newcommand{\Z}{\mathbb{Z}}
\newcommand{\eps}{\varepsilon}
\newcommand{\fhi}{\varphi}
\renewcommand{\div}{\mathrm{div}}
\def\calO{\mathcal{O}}
\def\calS{\mathcal{S}}
\newtheorem{remark}{Remark}[section]    
\newtheorem{theorem}{Theorem}[section]    
\newtheorem{lemma}{Lemma}[section]	
\newtheorem{corollary}{Corollary}[section]   
\numberwithin{equation}{section}
\begin{document}

\title[Optimal rates in nondivergence-form homogenization]{Optimal convergence rates for elliptic homogenization problems in nondivergence-form: analysis and numerical illustrations}

\author[T. Sprekeler]{Timo Sprekeler}
\address[Timo Sprekeler]{University of Oxford, Mathematical Institute, Woodstock Road, Oxford OX2 6GG, UK.}
\email{sprekeler@maths.ox.ac.uk}

\author[H. V. Tran]{Hung V. Tran}
\address[Hung V. Tran]{Department of Mathematics, University of Wisconsin Madison, Van Vleck hall, 480 Lincoln drive, Madison, WI 53706, USA}
\email{hung@math.wisc.edu}

\subjclass[2010]{35B27, 35B40, 35J25}
\keywords{Homogenization, nondivergence-form elliptic PDE, optimal convergence rates}
\date{\today}

\begin{abstract}
We study optimal convergence rates in the periodic homogenization of linear elliptic equations of the form $-A(x/\eps):D^2 u^{\eps} = f$ subject to a homogeneous Dirichlet boundary condition. 
We show that the optimal rate for the convergence of $u^{\eps}$ to the solution of the corresponding homogenized problem in the $W^{1,p}$-norm is $\calO(\eps)$. We further obtain optimal gradient and Hessian bounds with correction terms taken into account in the $L^p$-norm. 
We then provide an explicit $c$-bad diffusion matrix and use it to perform various numerical experiments, which demonstrate the optimality of the obtained rates.
\end{abstract}

\maketitle

\section{Introduction}

In this work, we study optimal rates in the periodic homogenization of elliptic equations in nondivergence-form. We consider the linear prototype equation subject to a homogeneous Dirichlet boundary condition, posed on a bounded smooth domain $\Omega\subset\R^n$, i.e., problems of the form
\begin{align}\label{ueps problem}
\left\{\begin{aligned}-A\left(\frac{\cdot}{\eps}\right):D^2 u^{\eps} &= f& &\text{in }\Omega,\\
u^{\eps} &= 0&  &\text{on }\partial\Omega,\end{aligned}\right.
\end{align}
with a parameter $\eps>0$ (considered to be small), a right-hand side 
\begin{align*}
f\in W^{3,q}(\Omega)\quad\text{for some }q>n,
\end{align*}
and a symmetric, $\Z^n$-periodic, uniformly elliptic coefficient function
\begin{align*}
A\in C^{0,\alpha}(\mathbbm{T}^n;\calS^n_{+})\quad\text{for some }\alpha\in (0,1].
\end{align*}
Here, $\mathbbm{T}^n:=\R^n/\Z^n$ denotes the flat $n$-dimensional torus and $\calS^n_{+}\subset \R^{n\times n}$ the set of symmetric positive definite $n\times n$ matrices. Throughout this work, we denote the unit cell in $\R^n$ by
\begin{align*}
Y:=[0,1]^n\subset\R^n.
\end{align*}

In the theory of periodic homogenization, it is well-known (see e.g., Bensoussan, Lions, Papanicolaou \cite{BLP11}, Jikov, Kozlov, Oleinik \cite{JKO94}) that as the parameter $\eps$ tends to zero, the corresponding sequence of solutions $(u^{\eps})_{\eps>0}$ to \eqref{ueps problem} converges uniformly on $\bar{\Omega}$ to the solution $u$ of the homogenized problem
\begin{align}\label{u problem}
\left\{\begin{aligned}-\bar{A}:D^2 u &= f& &\text{in }\Omega,\\
u &= 0& &\text{on }\partial\Omega.\end{aligned}\right.
\end{align}
Here, the effective coefficient $\bar{A}\in\calS^{n}_{+}$ is a constant positive definite matrix, and can be obtained through integration against an invariant measure, that is
\begin{align*}
\bar{A}:= \int_Y Ar 
\end{align*}
with the invariant measure $r:\R^n\rightarrow\R$ being the solution to the periodic problem
\begin{align*}
-D^2:(Ar) = 0\quad\text{in }Y,\qquad r \text{ is }Y\text{-periodic},\qquad r>0,\qquad \int_Y r = 1,
\end{align*}
see e.g., Avellaneda, Lin \cite{AL89}, Engquist, Souganidis \cite{ES08}. The effective coefficient $\bar{A}$ can be equivalently characterized via corrector functions: For $i,j\in\{1,\dots,n\}$, the $(i,j)$-th entry $\bar{a}_{ij}$ of $\bar{A}$ is the unique value such that the periodic cell problem
\begin{align}\label{vij problem}
-A:D^2 v^{ij} = a_{ij}-\bar{a}_{ij}\quad\text{in }Y,\qquad v^{ij} \text{ is }Y\text{-periodic},\qquad \int_Y v^{ij} = 0
\end{align}
admits a unique solution $v^{ij}:\R^n\rightarrow \R$, called a corrector function. 

We are interested in optimal rates for the convergence of the solution $u^{\eps}$ of \eqref{ueps problem} to the solution $u$ of the homogenized problem \eqref{u problem} in appropriate function spaces. Optimal rates in $L^{\infty}(\Omega)$ have recently been obtained in Guo, Tran, Yu \cite{GTY20}. With $c_j^{kl}\in\R$, $j,k,l\in\{1,\dots,n\}$, defined by
\begin{align}\label{cjkl def}
c_j^{kl}=c_j^{kl}(A):=\int_Y Ae_j\cdot \nabla v^{kl}\, r,
\end{align}
the function $h$ defined by
\begin{align}\label{h definition}
h:=\sum_{j,k,l=1}^n c_j^{kl}\partial^3_{jkl}u,
\end{align}
and the solution $z$ to the problem
\begin{align}\label{z problem}
\left\{\begin{aligned}-\bar{A}:D^2 z &= -h&  &\text{in }\Omega,\\
z &= 0&  &\text{on }\partial\Omega,\end{aligned}\right.
\end{align}
the main result in \cite{GTY20} states the following: 
\begin{theorem}[Theorem 1.2 in \cite{GTY20}]\label{Theorem Guo-Tran-Yu}
Assume that $A\in C^2(\mathbbm{T}^n;\calS^n_{+})$ and $f\in C^3(\bar{\Omega})$. Let $u^{\eps}$, $u$ and $z$ be the solutions to \eqref{ueps problem}, \eqref{u problem} and \eqref{z problem} respectively. Then we have
\begin{align*}
\|u^{\eps}-u+2\eps z\|_{L^{\infty}(\Omega)}=\calO(\eps^2).
\end{align*}
In particular, with $h$ given by \eqref{h definition}, the following assertions hold:
\begin{itemize}
\item[(i)] If $h\equiv 0$, then $\|u^{\eps}-u\|_{L^{\infty}(\Omega)}=\calO(\eps^2)$ and this rate of convergence is optimal.
\item[(ii)] If $h\not\equiv 0$, then $\|u^{\eps}-u\|_{L^{\infty}(\Omega)}=\calO(\eps)$ and this rate of convergence is optimal.
\end{itemize}
\end{theorem}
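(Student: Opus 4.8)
The plan is to prove the $\calO(\eps^2)$ estimate by a carefully truncated two-scale asymptotic expansion and then read off (i) and (ii). I would work with the ansatz
\begin{align*}
w^{\eps}(x):={}& u(x)-2\eps z(x)+\eps^2\sum_{k,l=1}^n v^{kl}(x/\eps)\,\partial^2_{kl}u(x)\\
&-2\eps^3\sum_{k,l=1}^n v^{kl}(x/\eps)\,\partial^2_{kl}z(x)+\eps^3 w_3(x,x/\eps),
\end{align*}
where, writing $g(x,y):=-2\sum_{i,j,k,l=1}^n a_{ij}(y)\,(\partial_j v^{kl})(y)\,(\partial^3_{ikl}u)(x)$ and denoting by $\langle\,\cdot\,\rangle_r:=\int_Y(\,\cdot\,)\,r$ the average against the invariant measure $r$, the profile $w_3(x,\cdot)$ is the unique $Y$-periodic mean-zero solution of the cell problem $-A(y):D^2_y w_3(x,y)=-g(x,y)-2h(x)$. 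This problem is solvable precisely because, using \eqref{cjkl def}, \eqref{h definition} and the symmetry of $A$, a direct computation gives the key identity $\langle g(x,\cdot)\rangle_r=-2h(x)$, so the right-hand side has vanishing $r$-average for every $x\in\Omega$. The fourth term, built from $D^2 z$, is the analogue of the $\eps^2$-corrector for the macroscopic correction $z$, and is the piece that is easy to overlook.

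The heart of the argument is to insert $w^{\eps}$ into $-A(x/\eps):D^2(\cdot)$, expand by the chain rule and sort by powers of $\eps$. At order $\eps^0$ the terms collapse to $-\bar A:D^2 u=f$ because $v^{kl}$ solves \eqref{vij problem}. At order $\eps^1$ there are several contributions: the mixed $x$--$y$ derivatives of the $\eps^2$-corrector produce $\eps\,g(x,x/\eps)$; the leading $y$-Hessian of $\eps^3 w_3$ produces $-\eps\big(g(x,x/\eps)+2h(x)\big)$; adding these cancels the oscillation and leaves $-2\eps h(x)$. Next, $-A(x/\eps):D^2(-2\eps z)=2\eps\,\bar A:D^2 z+2\eps\,(A(x/\eps)-\bar A):D^2 z=2\eps h+2\eps\,(A(x/\eps)-\bar A):D^2 z$ by \eqref{z problem}, which cancels the $-2\eps h$ but leaves the parasitic remainder $2\eps\,(A(x/\eps)-\bar A):D^2 z$; this is $\eps$ times a mean-zero oscillation, hence $\calO(\eps)$ (and not $\calO(\eps^2)$) in every $L^p(\Omega)$, $p\le\infty$. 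It is exactly cancelled by the order-$\eps^1$ contribution $-2\eps\,(A(x/\eps)-\bar A):D^2 z$ coming from the leading $y$-Hessian of the fourth term $-2\eps^3\sum_{k,l} v^{kl}(x/\eps)\partial^2_{kl}z$ (again via \eqref{vij problem}). Hence the order-$\eps^1$ residual vanishes. All remaining terms carry $\eps^2$ or $\eps^3$, with $Y$-periodic-in-$y$ profiles bounded in terms of $\|v^{kl}\|_{C^2}$, $\|w_3\|_{C^2}$, $\|u\|_{C^4}$ and $\|z\|_{C^4}$, all finite by $A\in C^2(\mathbbm{T}^n;\calS^n_+)$, $f\in C^3(\bar\Omega)$ and Schauder / $W^{2,p}$ estimates for \eqref{vij problem}, \eqref{u problem} and \eqref{z problem}. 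Therefore $-A(x/\eps):D^2 w^{\eps}=f+\calO(\eps^2)$ in $L^{\infty}(\Omega)$, and since $u=z=0$ on $\partial\Omega$ one has $w^{\eps}=\calO(\eps^2)$ on $\partial\Omega$.

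Setting $\rho^{\eps}:=u^{\eps}-w^{\eps}$, it is a classical solution of $-A(x/\eps):D^2\rho^{\eps}=\calO(\eps^2)$ in $\Omega$ with $\rho^{\eps}=\calO(\eps^2)$ on $\partial\Omega$. The operator $-A(x/\eps):D^2$ is uniformly elliptic with $\eps$-independent ellipticity constants, so the Alexandrov--Bakelman--Pucci maximum principle gives $\|\rho^{\eps}\|_{L^{\infty}(\Omega)}\le\|\rho^{\eps}\|_{L^{\infty}(\partial\Omega)}+C\,\|A(x/\eps):D^2\rho^{\eps}\|_{L^{n}(\Omega)}=\calO(\eps^2)$. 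Since also $\|w^{\eps}-(u-2\eps z)\|_{L^{\infty}(\Omega)}=\calO(\eps^2)$, we obtain $\|u^{\eps}-u+2\eps z\|_{L^{\infty}(\Omega)}=\calO(\eps^2)$. For (i): if $h\equiv 0$ then the right-hand side of \eqref{z problem} vanishes, so $z\equiv 0$ by uniqueness, and the estimate reads $\|u^{\eps}-u\|_{L^{\infty}(\Omega)}=\calO(\eps^2)$; optimality I would verify on an explicit example (in one space dimension the solutions of \eqref{ueps problem} and \eqref{u problem} are available in closed form and $h\equiv 0$ automatically, and expanding shows the $\eps^2$-coefficient does not vanish for generic data). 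For (ii): if $h\not\equiv 0$ then $z\not\equiv 0$ (again by uniqueness for \eqref{z problem}), so for $\eps$ small $\|u^{\eps}-u\|_{L^{\infty}(\Omega)}\ge 2\eps\|z\|_{L^{\infty}(\Omega)}-\|u^{\eps}-u+2\eps z\|_{L^{\infty}(\Omega)}\ge 2\eps\|z\|_{L^{\infty}(\Omega)}-C\eps^2\ge c\eps$, while the matching upper bound $\calO(\eps)$ follows from the same estimate; thus the rate is exactly $\eps$ and is optimal.

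I expect the main obstacle to be the order-$\eps^1$ bookkeeping, and in particular the recognition that the naive two-scale approximation $u+\eps^2\sum_{k,l} v^{kl}(x/\eps)\partial^2_{kl}u$ is \emph{not} $\calO(\eps^2)$-accurate: the obstruction is precisely the nonvanishing $r$-average $\langle g(x,\cdot)\rangle_r=-2h(x)$ of the source of the would-be second corrector, which forces one to re-inject this quantity at the macroscopic scale through $z$, and then to absorb the resulting parasitic oscillation $2\eps\,(A(x/\eps)-\bar A):D^2 z$ by the extra $\eps^3$-corrector built from $D^2 z$. Closing the estimate then requires controlling all the leftover $\eps^2$- and $\eps^3$-terms uniformly, which is where the smoothness of $A$ and $f$ (hence of $u$, $z$ and the correctors) is used. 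The optimality in case (i) is a separate, more computational point that genuinely needs a concrete example rather than a soft argument.
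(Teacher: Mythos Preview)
Your approach is correct and is essentially the classical two-scale expansion combined with the ABP maximum principle; this is very likely the route taken in \cite{GTY20}, from which the statement is quoted. Note, however, that the present paper does \emph{not} give an independent proof of Theorem~\ref{Theorem Guo-Tran-Yu}: it cites the result and only recovers the main estimate $\|u^{\eps}-u+2\eps z\|_{L^{\infty}(\Omega)}=\calO(\eps^2)$ as a byproduct of the $W^{1,p}$ analysis (Remark~\ref{rmrk some estimates}\,(i)). The paper's route to that byproduct is genuinely different from yours. Instead of building a single ansatz $w^{\eps}$ and applying ABP, the paper (i) introduces \emph{boundary correctors} $\theta^{\eps}$ and $\theta^{\eps}_{\chi}$ so that the intermediate functions vanish on $\partial\Omega$, (ii) replaces your direct use of $z$ plus the extra $\eps^3$-corrector $-2\eps^3\sum v^{kl}(\cdot/\eps)\partial^2_{kl}z$ by the oscillatory function $z^{\eps}$ (solution of $-A(\cdot/\eps):D^2 z^{\eps}=-h$) together with a separate estimate $\|z^{\eps}-z\|=\calO(\eps)$, and (iii) closes with the uniform $C^{1,\alpha}$ estimate of Avellaneda--Lin (Lemma~\ref{C1alpha est}) rather than ABP. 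Your device of adding the $z$-corrector to kill the parasitic term $2\eps(A(\cdot/\eps)-\bar A):D^2 z$ is the exact analogue of the paper's passage through $z^{\eps}$; each achieves the same cancellation. The paper's machinery is heavier because it is designed to deliver the sharper $W^{1,p}$ statement of Theorem~\ref{Main theorem}; for the $L^{\infty}$ estimate alone, your argument is more economical.

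One small point of care: your residual bookkeeping claims control by $\|u\|_{C^4}$ and $\|z\|_{C^4}$, but the term $\eps^3 A(\cdot/\eps):D^2_x w_3$ involves $D^5 u$, and similarly the full expansion of the $z$-corrector touches $D^4 z$ (hence $D^5 u$ again through $h$). Under $f\in C^3(\bar\Omega)$ and smooth $\Omega$ you only get $u\in W^{5,p}$ for all $p<\infty$, not $u\in C^5$, so the residual is $\calO(\eps^2)$ in $L^n(\Omega)$ rather than in $L^{\infty}(\Omega)$. This is harmless since ABP uses precisely the $L^n$ norm, but the sentence ``$-A(x/\eps):D^2 w^{\eps}=f+\calO(\eps^2)$ in $L^{\infty}(\Omega)$'' should be weakened accordingly. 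Your treatment of the optimality claims (i)--(ii) is fine; the paper does not reprove these either.
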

\begin{remark}
There is a typo in \cite{GTY20}, which uses the opposite sign for the $\calO(\eps)$-term.
\end{remark}

Let us recall the $\calO$ notation, which we are going to use throughout this paper: For a function  $e: (0,\infty)\ni \eps\mapsto e(\eps)\in [0,\infty)$ and an exponent $\gamma\geq 0$, we write
\begin{align*}
e(\eps)=\calO(\eps^{\gamma}) \quad \Longleftrightarrow \quad \exists\, C,\eps_0>0:\; e(\eps)\leq C\eps^{\gamma}\quad\forall \eps \in (0,\eps_0).
\end{align*}
As a consequence of Theorem \ref{Theorem Guo-Tran-Yu}, we can classify coefficients $A\in C^2(\mathbbm{T}^n;\calS^n_{+})$ into those that give optimal rate of convergence $\calO(\eps^2)$, called the $c$-good coefficients, and those that give optimal rate of convergence $\calO(\eps)$, called the $c$-bad coefficients.

\begin{corollary}[$c$-good and $c$-bad matrices]\label{Cor c-good} Let $A\in C^2(\mathbbm{T}^n;\calS^n_{+})$. Then, with $\{c_j^{kl}\}_{1\leq j,k,l\leq n}$ given by \eqref{cjkl def}, the following assertions hold: 
\begin{itemize}
\item[(i)] If $c_j^{kl}(A)=0$ for all $j,k,l\in\{1,\dots,n\}$, then  the situation (i) of Theorem \ref{Theorem Guo-Tran-Yu} occurs for any choice of $f$. We then say $A$ is $c$-good.
\item[(ii)] If $c_j^{kl}(A)\neq 0$ for some $j,k,l\in\{1,\dots,n\}$, then there exists $f$ such that the situation (ii) of Theorem \ref{Theorem Guo-Tran-Yu} occurs. We then say $A$ is $c$-bad.
\end{itemize}
\end{corollary}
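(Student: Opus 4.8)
\emph{Proof plan.} Statement (i) is immediate from the definition \eqref{h definition} of $h$: the function $h$ is obtained by applying the fixed constant-coefficient operator $\sum_{j,k,l}c_j^{kl}(A)\,\partial^3_{jkl}$ to $u$, and $u$ is determined by $f$; so if $c_j^{kl}(A)=0$ for all $j,k,l$ then $h\equiv 0$ regardless of $f$, the hypothesis of case (i) of Theorem~\ref{Theorem Guo-Tran-Yu} holds, and we inherit both the rate $\calO(\eps^2)$ and its optimality.

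For statement (ii), I would argue by contradiction and assume that $h\equiv 0$ in $\Omega$ for \emph{every} admissible right-hand side. The point is that the solution map of \eqref{u problem} is onto a large class of functions: given any $\fhi\in C_c^\infty(\Omega)$, the choice $f:=-\bar A:D^2\fhi\in C_c^\infty(\Omega)$ is admissible, and by uniqueness of solutions to the Dirichlet problem for the constant, uniformly elliptic operator $-\bar A:D^2$, the corresponding solution $u$ of \eqref{u problem} is exactly $\fhi$. Hence the assumption forces the constant-coefficient operator $L:=\sum_{j,k,l=1}^n c_j^{kl}(A)\,\partial^3_{jkl}$ to vanish on all of $C_c^\infty(\Omega)$.

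It remains to deduce $c_j^{kl}(A)=0$ for all $j,k,l$. Fix $x_0\in\Omega$ and a cutoff $\eta\in C_c^\infty(\Omega)$ with $\eta\equiv 1$ near $x_0$. For any symmetric $3$-tensor $(T_{jkl})$ the function $\fhi(x):=\eta(x)\sum_{j,k,l}T_{jkl}(x-x_0)_j(x-x_0)_k(x-x_0)_l$ lies in $C_c^\infty(\Omega)$ and satisfies $\partial^3_{jkl}\fhi(x_0)=6\,T_{jkl}$, so $0=(L\fhi)(x_0)=6\sum_{j,k,l}c_j^{kl}(A)\,T_{jkl}$. Since this holds for every symmetric $T$, the total symmetrization of the tensor $(c_j^{kl}(A))$ vanishes; combined with the symmetry of $c_j^{kl}(A)$ under permutations of $\{j,k,l\}$ (it is symmetric in $(k,l)$ because $v^{kl}=v^{lk}$, and symmetric in $(j,k)$ as well), this yields $c_j^{kl}(A)=0$ for all $j,k,l$, contradicting the hypothesis of (ii). Therefore there is an admissible $f$ with $h\not\equiv 0$, so case (ii) of Theorem~\ref{Theorem Guo-Tran-Yu} applies and the optimal rate is $\calO(\eps)$.

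The routine ingredients --- uniqueness for \eqref{u problem} and the Taylor-polynomial construction of $\fhi$ --- pose no difficulty; the one step that genuinely requires work is verifying the full symmetry of $(c_j^{kl}(A))$ in its three indices, which I would obtain by pairing the corrector equations \eqref{vij problem} with one another, using the invariant-measure equation $-D^2:(Ar)=0$, and integrating by parts over $Y$. (Should this symmetry fail, the argument above still proves the sharp statement: an admissible $f$ with $h\not\equiv 0$ exists if and only if the operator $L$ is not the zero operator.)
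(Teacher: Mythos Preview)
The paper states this corollary without proof, treating it as an immediate consequence of Theorem~\ref{Theorem Guo-Tran-Yu}, so your argument is necessarily more detailed than anything in the text. Your treatment of part~(i) is fine, and for part~(ii) the device of recovering an arbitrary $\fhi\in C_c^\infty(\Omega)$ as the homogenized solution by taking $f=-\bar A:D^2\fhi$ is the right idea.

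The gap is in the symmetry claim. You assert that $c_j^{kl}(A)$ is symmetric under swapping $j$ and $k$, but this is \emph{false}: the paper's own explicit example in Theorem~\ref{thm explicit cbad} has $c_1^{22}=-\tfrac{1}{128\pi}$ while $c_2^{12}=0$. So the integration-by-parts verification you allude to cannot succeed, and the step ``the symmetrization vanishes, hence every $c_j^{kl}$ vanishes'' breaks down. Since $\partial^3_{jkl}u$ is fully symmetric in $(j,k,l)$, the operator $L=\sum_{j,k,l}c_j^{kl}\partial^3_{jkl}$ depends only on the symmetrized tensor $\tfrac{1}{3}\bigl(c_j^{kl}+c_k^{jl}+c_l^{jk}\bigr)$, and your argument genuinely proves only the equivalence
\[
\exists\,f\in C^3(\bar\Omega)\text{ with }h\not\equiv 0\quad\Longleftrightarrow\quad c_j^{kl}+c_k^{jl}+c_l^{jk}\neq 0\text{ for some }(j,k,l),
\]
which is exactly what your final parenthetical concedes. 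That is the sharp statement obtainable by this route; whether the corollary as literally stated (with the unsymmetrized condition) holds is a separate question that neither your argument nor the paper addresses. Either an additional identity among the $c_j^{kl}$ is needed, or the corollary should be read with the symmetrized coefficients.
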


It has further been shown that the set of $c$-bad matrices is open and dense in $C^2(\mathbbm{T}^n;\calS^n_{+})$ for dimensions $n\geq 2$ (see Theorem 1.4 in \cite{GTY20}). Therefore, we have generically that the optimal rate is $\calO(\eps)$ in $L^{\infty}(\Omega)$. Related results on convergence rates and error estimates in the periodic homogenization of elliptic equations in divergence-form have been derived by various authors; see e.g., \cite{Gri06,KLS12,MV97,OV07,Sus13} and the references therein.

The objective of this work are the optimal rates in higher-order norms. This question has not been studied yet and it seems that the only available result in higher-order norms is the following $W^{2,p}$ corrector estimate from Capdeboscq, Sprekeler, S\"{u}li \cite{CSS20}:

\begin{theorem}[Theorem 2.8 in \cite{CSS20}]\label{Capdeboscq-Sprekeler-Suli}
Assume that $A\in C^{0,\alpha}(\mathbbm{T}^n;\calS^n_{+})$ for some $\alpha\in(0,1]$ and $f\in W^{2,p}(\Omega)$ for some $p\in (1,\infty)$. Further, assume that the solution $u$ to \eqref{u problem} satisfies $u\in W^{4,p}(\Omega)\cap W^{2,\infty}(\Omega)$. Then, with $u^{\eps}$ given by \eqref{ueps problem} and $V=(v^{ij})_{1\leq i,j\leq n}$ given by \eqref{vij problem}, we have
\begin{align*}
\left\|u^{\eps}- u - \eps^2\,  V\left(\frac{\cdot}{\eps}\right):D^2 u  \right\|_{W^{2,p}(\Omega)}=\calO(\eps^{\frac{1}{p}}).
\end{align*}
\end{theorem}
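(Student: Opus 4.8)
The plan is to work with the first-order corrector ansatz and estimate the equation that the remainder solves. Set
\[
w^{\eps} := u^{\eps} - u - \eps^2\, V\!\left(\tfrac{\cdot}{\eps}\right):D^2 u,
\]
and plug this into $-A(\cdot/\eps):D^2(\cdot)$. Writing $V^{ij}(y)=v^{ij}(y)$ and using the chain rule, one has $D^2\bigl(\eps^2 v^{ij}(\cdot/\eps)\,\partial_{ij}u\bigr) = (D^2 v^{ij})(\cdot/\eps)\,\partial_{ij}u + 2\eps\,(\nabla v^{ij})(\cdot/\eps)\otimes \nabla\partial_{ij}u + \eps^2 v^{ij}(\cdot/\eps)\,D^2\partial_{ij}u$ (symmetrized). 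Contracting against $A(\cdot/\eps)$ and invoking the cell problem \eqref{vij problem}, the leading order term $-A(\cdot/\eps):(D^2 v^{ij})(\cdot/\eps)\,\partial_{ij}u = (a_{ij}(\cdot/\eps)-\bar a_{ij})\,\partial_{ij}u$, so that the $O(1)$ pieces combine to $-A(\cdot/\eps):D^2 u + (A(\cdot/\eps)-\bar A):D^2 u = -\bar A:D^2 u = f = -A(\cdot/\eps):D^2 u^{\eps}$ up to the lower-order terms. Hence $w^{\eps}$ solves a Dirichlet problem
\[
-A\!\left(\tfrac{\cdot}{\eps}\right):D^2 w^{\eps} = \eps\, F_1^{\eps} + \eps^2\, F_2^{\eps} \quad\text{in }\Omega, \qquad w^{\eps} = g^{\eps} \quad\text{on }\partial\Omega,
\]
where $F_1^{\eps}$ is a bounded-in-$\eps$ combination of $(A(\cdot/\eps):(\nabla v^{ij})(\cdot/\eps)\otimes\nabla\partial_{ij}u)$ type terms, $F_2^{\eps}$ involves $v^{ij}(\cdot/\eps)\,D^2\partial_{ij}u$, and the nonzero boundary data is $g^{\eps} = -\eps^2 V(\cdot/\eps):D^2 u|_{\partial\Omega}$, which is $O(\eps^2)$ in, say, $W^{2-1/p,p}(\partial\Omega)$-type norms (using $u\in W^{4,p}\cap W^{2,\infty}$ and $V\in W^{2,p}(\mathbbm T^n)$ by Calderón–Zygmund applied to the cell problems).

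The core estimate is then a uniform-in-$\eps$ $W^{2,p}$ bound for the operator $-A(\cdot/\eps):D^2(\cdot)$ with zero boundary data. Since $A(\cdot/\eps)\in C^{0,\alpha}$ with Hölder seminorm blowing up like $\eps^{-\alpha}$, one cannot apply classical Calderón–Zygmund/Schauder estimates with an $\eps$-independent constant directly. The right tool here is the $W^{2,p}$ a priori estimate for nondivergence-form operators with merely uniformly elliptic, continuous (indeed VMO) coefficients: $A(\cdot/\eps)$ has the \emph{same} modulus of continuity as $A$ after rescaling in the sense that its $L^p$-averaged oscillation over balls of radius $\rho$ is controlled by $\omega(\rho/\eps)$ — but crucially, the VMO-type Calderón–Zygmund constant depends only on the ellipticity constants, $n$, $p$, $\Omega$, and the modulus of continuity $\omega$ of $A$ itself, \emph{not} on $\eps$; this is because $\omega_{A(\cdot/\eps)}(\rho) = \omega_A(\rho/\eps) \le \|\omega_A\|_\infty$ and, more to the point, the BMO/VMO seminorm is scaling-invariant. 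Thus $\|w^{\eps}\|_{W^{2,p}(\Omega)} \le C\bigl(\|\eps F_1^{\eps} + \eps^2 F_2^{\eps}\|_{L^p(\Omega)} + \|g^{\eps}\|_{W^{2-1/p,p}(\partial\Omega)} + \text{lower order}\bigr)$ with $C$ independent of $\eps$. Now $\|F_1^{\eps}\|_{L^p(\Omega)} \lesssim \sum \|(\nabla v^{ij})(\cdot/\eps)\|_{L^p(\Omega)}\,\|\nabla\partial_{ij}u\|_{L^\infty}$, and by the standard oscillating-integrals bound $\|\phi(\cdot/\eps)\|_{L^p(\Omega)} \to \|\phi\|_{L^p(Y)}$ (hence is bounded, not small), giving $\|\eps F_1^{\eps}\|_{L^p} = O(\eps)$. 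Likewise $\|\eps^2 F_2^{\eps}\|_{L^p} = O(\eps^2)$ and $\|g^{\eps}\|_{\partial\Omega} = O(\eps^2)$. This would give $O(\eps)$, which is \emph{better} than the claimed $O(\eps^{1/p})$ — so the $O(\eps^{1/p})$ rate must come from a genuine boundary-layer obstruction: the corrector ansatz is not compatible with the zero Dirichlet condition, and correcting $g^{\eps}$ down to zero costs an $L^p$-harmonic-measure/boundary-layer factor. The sharp way to see the $\eps^{1/p}$ is: the term $\eps F_1^{\eps}$ is of the form $\eps \,\mathrm{div}$-like structure or has a fast-oscillating factor whose $W^{2,p}$-to-$L^p$ interaction near $\partial\Omega$, combined with a cutoff localizing to an $\eps$-neighborhood of $\partial\Omega$ of measure $O(\eps)$, contributes $\eps\cdot(\eps)^{1/p}\cdot\eps^{-1} = \eps^{1/p}$ when one trades one derivative for the cutoff.

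More precisely, I would introduce a boundary cutoff $\eta_\eps$ with $\eta_\eps\equiv 1$ outside an $\eps$-collar of $\partial\Omega$, $\eta_\eps\equiv 0$ within $\eps/2$ of $\partial\Omega$, $|\nabla\eta_\eps|\lesssim \eps^{-1}$, $|D^2\eta_\eps|\lesssim\eps^{-2}$, and use the \emph{modified} ansatz $u + \eps^2\eta_\eps V(\cdot/\eps):D^2u$, so the new remainder has zero boundary data. Then the commutator terms $[\,A(\cdot/\eps):D^2,\,\eta_\eps\,]$ acting on $\eps^2 V(\cdot/\eps):D^2u$ produce: (a) terms with one derivative on $\eta_\eps$ and one on the corrector, of size $\eps^2\cdot\eps^{-1}\cdot\eps^{-1} = 1$ pointwise but supported on a set of measure $O(\eps)$, hence $O(\eps^{1/p})$ in $L^p$; (b) terms with two derivatives on $\eta_\eps$, of size $\eps^2\cdot\eps^{-2} = 1$ pointwise, again on a set of measure $O(\eps)$, giving $O(\eps^{1/p})$; (c) the already-discussed $O(\eps)$ bulk terms. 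Applying the $\eps$-uniform $W^{2,p}$ estimate to this modified remainder and then adding back $\|\eps^2\eta_\eps V(\cdot/\eps):D^2 u - \eps^2 V(\cdot/\eps):D^2u\|_{W^{2,p}(\Omega)}$ (another collar term of size $O(\eps^{1/p})$ by the same measure-of-collar counting) yields the claimed $\|w^\eps\|_{W^{2,p}(\Omega)} = O(\eps^{1/p})$. The main obstacle, and the step deserving the most care, is establishing the $\eps$-\emph{uniform} $W^{2,p}$ a priori estimate for $-A(\cdot/\eps):D^2$: one must verify that the VMO (or appropriate Dini-type, given only $C^{0,\alpha}$) modulus entering the Calderón–Zygmund constant of Chiarenza–Frasca–Longo / Caffarelli-type theory is invariant under the rescaling $A\mapsto A(\cdot/\eps)$, so that the constant genuinely does not degenerate as $\eps\to 0$; everything else is bookkeeping of collar-supported terms and standard oscillating-test-function $L^p$ bounds.
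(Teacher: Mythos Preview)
The paper does not prove this statement; it is quoted from \cite{CSS20}. That said, your overall architecture---introduce a cutoff supported in an $\eps$-collar of $\partial\Omega$ to kill the boundary mismatch, count the commutator terms via $|\{\dist(\cdot,\partial\Omega)<\eps\}|=\calO(\eps)$ to produce the $\eps^{1/p}$, and then apply an $\eps$-uniform $W^{2,p}$ a~priori estimate to the resulting zero-Dirichlet problem---is exactly the right strategy and matches how the present paper handles the closely related $W^{1,p}$ boundary-corrector bound (Lemma~\ref{lemma boundary corrector}): there one also introduces a collar cutoff $\eta$ and reduces to an $\eps$-uniform a~priori estimate. Your identification of the boundary layer as the sole source of the suboptimal rate $\eps^{1/p}$ (the interior remainder being $\calO(\eps)$) is correct.

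There is, however, a genuine gap in the step you yourself flag as ``deserving the most care'': the $\eps$-uniform $W^{2,p}$ estimate for $-A(\cdot/\eps):D^2$ cannot be obtained from Chiarenza--Frasca--Longo/VMO-type Calder\'on--Zygmund theory. The constant in that theory depends on the smallness of the mean-oscillation modulus $\eta_A(r)=\sup_{\rho\le r,\,x}\fint_{B_\rho(x)}|A-\langle A\rangle_{B_\rho(x)}|$ below some threshold scale. For the rescaled coefficient you have $\eta_{A(\cdot/\eps)}(r)=\eta_A(r/\eps)$, which for any fixed $r>0$ tends to the full BMO seminorm of $A$ as $\eps\to 0$---it is \emph{not} uniformly small. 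Scaling invariance of the BMO \emph{seminorm} is true but irrelevant: bounded BMO does not yield $W^{2,p}$ estimates; one needs the modulus to vanish at small scales, and that fails uniformly here. The correct source of the $\eps$-uniform $W^{2,p}$ bound is the Avellaneda--Lin large-scale regularity theory for \emph{periodic} operators (compactness/iteration comparing to the homogenized operator), which is specific to the periodic structure and has no analogue for generic VMO coefficients. Alternatively, one can bypass the nondivergence uniform $W^{2,p}$ issue by multiplying through by the invariant measure $r$ and rewriting the equation in divergence form (as the paper does for $\theta^\eps$), then invoking uniform $W^{1,p}$ estimates from \cite{AL91} on $\nabla w^\eps$; this is the route \cite{CSS20} and the present paper take. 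Either way, your VMO argument as written does not close.
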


Note that the standing assumption in \cite{CSS20} is $A\in W^{1,q}(Y)\cap C^{0,\alpha}(\mathbbm{T}^n;\calS^n_{+})$ for some $q>n$, which is useful for the numerical homogenization but not essential for the result of Theorem \ref{Capdeboscq-Sprekeler-Suli}. We observe that we cannot expect strong convergence of $u^{\eps}$ to the homogenized solution $u$ in $W^{2,p}(\Omega)$ and that it is necessary to add corrector terms. The optimality of the rate of convergence $\calO(\eps^{\frac{1}{p}})$ in Theorem \ref{Capdeboscq-Sprekeler-Suli} has not been discussed yet, which is a gap of knowledge we want to fill. The main contribution of this work is to derive optimal $W^{1,p}(\Omega)$ estimates for $p\in (1,\infty)$ and to provide numerical illustrations.
 
For the numerical homogenization of linear equations in nondivergence-form, we refer the reader to Capdeboscq, Sprekeler, S\"{u}li \cite{CSS20}, Froese, Oberman \cite{FO09}, and the references therein. Let us note that the divergence-form case was the focus of active research over the past decades; see e.g., the works \cite{AEE12,EE03,EH09,EW02,HW97} by various authors on heterogeneous multiscale methods and multiscale finite element methods. 

For some results on fully nonlinear equations of nondivergence-structure, we refer to Camilli, Marchi \cite{CM09}, Kim, Lee \cite{KL16} for convergence rates and to Gallistl, Sprekeler, S\"{u}li \cite{GSS20}, Finlay, Oberman \cite{FO18} for numerical homogenization of Hamilton--Jacobi--Bellman equations.

\subsection{Main results}

The main result is the following theorem on optimal rates for the convergence of $u^{\eps}$ to the homogenized solution $u$ in $W^{1,p}(\Omega)$:

\begin{theorem}[$W^{1,p}$ estimate and optimal rate]\label{Main theorem}
Assume that $A\in C^{0,\alpha}(\mathbbm{T}^n;\calS^n_{+})$ for some $\alpha\in (0,1]$ and $f\in W^{3,q}(\Omega)$ for some $q>n$. Let $u^{\eps}$, $u$ and $z$ be the solutions to \eqref{ueps problem}, \eqref{u problem} and \eqref{z problem} respectively. Further, let $V=(v^{ij})_{1\leq i,j\leq n}$ be the matrix of corrector functions given by \eqref{vij problem}. Then, for all $p\in (1,\infty)$, we have that
\begin{align*}
\left\|u^{\eps}- u +2\eps z - \eps^2\,  V\left(\frac{\cdot}{\eps}\right):D^2 u  \right\|_{W^{1,p}(\Omega)}=\calO(\eps^{1+\frac{1}{p}}).
\end{align*}
In particular, for all $p\in (1,\infty)$, the sequence $(u^{\eps})_{\eps>0}$ converges to the homogenized solution $u$ strongly in $W^{1,p}(\Omega)$ with the rate
\begin{align*}
\left\|u^{\eps}- u \right\|_{W^{1,p}(\Omega)}=\calO(\eps),
\end{align*}
and this rate of convergence $\calO(\eps)$ is optimal in general.
\end{theorem}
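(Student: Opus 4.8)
The plan is to establish the $W^{1,p}$-estimate for the "two-correction" ansatz
$$w^{\eps} := u^{\eps} - u + 2\eps z - \eps^2\, V\!\left(\tfrac{\cdot}{\eps}\right):D^2 u$$
by plugging this ansatz into the equation $-A(\cdot/\eps):D^2(\cdot) = \dots$, computing the residual, and then invoking interior and boundary regularity estimates for the nondivergence operator $-A(\cdot/\eps):D^2$ (with $C^{0,\alpha}$ coefficients, Calderón–Zygmund / Schauder-type $W^{2,p}$ bounds are available). First I would expand $-A(\cdot/\eps):D^2 w^{\eps}$: the leading term $-A(\cdot/\eps):D^2 u$ does not equal $f$, and the discrepancy is corrected by the $\eps^2 V(\cdot/\eps):D^2u$ term, using the defining cell problem \eqref{vij problem} for $v^{ij}$. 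Carrying the expansion out, $D^2$ hitting the oscillatory corrector produces, at order $\eps^0$, exactly the algebraic identity $-A:D^2v^{ij} = a_{ij} - \bar a_{ij}$, which (summed against $D^2u$) cancels $A(\cdot/\eps):D^2u - \bar A:D^2u$; at order $\eps^1$ the cross terms $A_{kl}(\cdot/\eps)\,\partial_k v^{ij}(\cdot/\eps)\,\partial_l\partial_{ij}u$ appear, and \emph{their cell-average} is precisely $\sum c_j^{kl}\partial^3_{jkl}u = h$ by the definition \eqref{cjkl def} of $c_j^{kl}$ — this is what the term $2\eps z$ is designed to kill via \eqref{z problem}. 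What remains after these cancellations is a residual of the form $\eps^1 g_1(\cdot/\eps) + \eps^2 g_2(\cdot/\eps) + (\text{boundary-layer terms})$, where $g_1$ has \emph{zero cell average} and the leading contribution to the error is governed by oscillatory integrals of mean-zero periodic functions times smooth (in $x$) coefficients coming from derivatives of $u$ and $z$.

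Next I would treat the mean-zero oscillatory right-hand side. The standard device is to write a mean-zero periodic function $g_1$ as $-A:D^2 \Theta$ (or use a second-order "flux corrector"/double cell problem) so that $\eps g_1(\cdot/\eps)$ gains a factor of $\eps$ when one integrates, i.e., $\eps g_1(x/\eps) = -\eps^2 A(x/\eps):D^2_x[\eps^{-1}\cdot\text{(smooth)}\cdot\Theta(x/\eps)] + \dots$, effectively turning an $\calO(\eps)$ source into something that contributes $\calO(\eps^2)$ in the bulk. The genuinely $\calO(\eps^{1+1/p})$-sized, not $\calO(\eps^2)$-sized, contribution then comes entirely from the \textbf{boundary layer}: the corrector ansatz $\eps^2 V(\cdot/\eps):D^2u$ (and lower-order pieces) does not vanish on $\partial\Omega$, so one must subtract a boundary-layer corrector, or more simply estimate the error in a strip of width $\sim\eps$ near $\partial\Omega$. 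In that strip the $W^{2,p}$-type estimate for the residual, combined with the fact that $\|D(\text{oscillatory corrector})\|_{L^p(\text{strip})} \lesssim |\text{strip}|^{1/p} = \calO(\eps^{1/p})$ and that $D$ of the corrector is $\calO(\eps)$ in amplitude after one $\eps$-power is used, yields the $\calO(\eps^{1+1/p})$ bound. I would combine the interior $\calO(\eps^2)$ estimate and the boundary-strip $\calO(\eps^{1+1/p})$ estimate, noting $1+1/p \le 2$, to conclude $\|w^{\eps}\|_{W^{1,p}(\Omega)} = \calO(\eps^{1+1/p})$.

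To obtain the corollary $\|u^{\eps}-u\|_{W^{1,p}(\Omega)} = \calO(\eps)$, I would use the triangle inequality: since $\|2\eps z\|_{W^{1,p}(\Omega)} = \calO(\eps)$ (as $z\in W^{2,p}(\Omega)$ is independent of $\eps$), and $\|\eps^2 V(\cdot/\eps):D^2u\|_{W^{1,p}(\Omega)}$ is $\calO(\eps^2)\cdot\|D^2u\|\cdot\|V\|_{L^\infty} + \calO(\eps^2)\cdot\eps^{-1}\|DV\|_{L^p}\cdot\|D^2u\| = \calO(\eps)$ (the gradient hitting $V(\cdot/\eps)$ loses one $\eps$), both correction terms are $\calO(\eps)$ in $W^{1,p}$, so the full left-hand side estimate collapses to $\|u^{\eps}-u\|_{W^{1,p}} = \calO(\eps)$. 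For optimality, I would appeal to the $c$-bad construction: for a $c$-bad matrix $A$ and a suitable $f$ one has $h\not\equiv 0$, hence $z\not\equiv 0$; then $u^{\eps} - u = -2\eps z + o(\eps)$ in $W^{1,p}$, and since $\|z\|_{W^{1,p}(\Omega)} > 0$ the rate cannot be improved — a more careful lower bound (e.g. testing against a fixed smooth vector field, or using the explicit $c$-bad matrix constructed later in the paper) makes this rigorous.

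The main obstacle I anticipate is the boundary-layer analysis under the low regularity $A\in C^{0,\alpha}$: one cannot assume smoothness of correctors, so the boundary-layer corrector must be handled via $W^{2,p}$ Calderón–Zygmund estimates rather than classical Schauder theory, and one must be careful that all the $x$-regularity needed (three derivatives of $u$, hence $f\in W^{3,q}$, $q>n$, to get $u\in W^{5,q}\hookrightarrow C^{4}$-ish via Sobolev embedding, so that $h\in W^{1,q}$ and $z$ is regular enough) is exactly matched by the hypotheses. Keeping track of which $\eps$-powers are "spent" converting oscillations into divergence/double-divergence form, versus which are lost when $\nabla$ lands on $V(\cdot/\eps)$, is the delicate bookkeeping that determines whether the exponent is $1+\tfrac1p$ or something worse.
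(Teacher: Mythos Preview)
Your high-level strategy---two-scale expansion with first-order correctors $V$ and second-order cell correctors (your $\Theta$, the paper's $\chi^{jkl}$), identification of the boundary layer as the source of the $\eps^{1/p}$ loss, and optimality via $c$-bad matrices---matches the paper's. There is, however, one genuine gap and one point where the paper organizes the argument differently.

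\textbf{The gap: nondivergence $W^{2,p}$ estimates do not control the boundary corrector sharply.} You propose to handle the boundary-layer corrector $\theta^\eps$ (solving $-A(\cdot/\eps):D^2\theta^\eps=0$ in $\Omega$, $\theta^\eps=-V(\cdot/\eps):D^2u$ on $\partial\Omega$) ``via $W^{2,p}$ Calder\'on--Zygmund estimates.'' This loses a full power of $\eps$. After extending the boundary data by a cutoff $\eta$ supported in the $\eps$-strip, the nondivergence source $-A(\cdot/\eps):D^2\bigl[\eta\,V(\cdot/\eps):D^2u\bigr]$ carries \emph{two} fast derivatives and thus has $L^p$-norm of order $\eps^{-2+1/p}$; the uniform $W^{2,p}$ estimate then gives only $\eps^2\|\theta^\eps\|_{W^{1,p}}=\calO(\eps^{1/p})$, whereas $\calO(\eps^{1+1/p})$ is required. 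The paper's remedy (Lemma~\ref{lemma boundary corrector}) is to first \emph{rewrite the boundary-corrector problem in divergence form} via the invariant measure and a skew-symmetric correction (the Avellaneda--Lin reduction), obtaining $-\nabla\cdot\bigl(A^{\mathrm{div}}(\cdot/\eps)\nabla\theta^\eps\bigr)=0$, and then apply the uniform $W^{1,p}$ estimate for divergence-form operators (Lemma~\ref{W1p est for divform}). In divergence form the cutoff trick produces a source $-\nabla\cdot F$ with $F=A^{\mathrm{div}}(\cdot/\eps)\nabla[\eta\,V(\cdot/\eps):D^2u]$ carrying only \emph{one} fast derivative, so $\|F\|_{L^p}=\calO(\eps^{-1+1/p})$ and hence $\eps\|\theta^\eps\|_{W^{1,p}}=\calO(\eps^{1/p})$. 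Your strip heuristic $|\text{strip}|^{1/p}=\eps^{1/p}$ is exactly this computation, but it only becomes a rigorous PDE estimate once one has access to a uniform $W^{1,p}$ bound with right-hand side in $W^{-1,p}$, and that is what the divergence-form reduction provides.

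\textbf{A structural difference: $z$ versus $z^\eps$.} You insert $2\eps z$ directly into $-A(\cdot/\eps):D^2$. Since $z$ solves the \emph{homogenized} equation $-\bar A:D^2z=-h$, this generates an extra oscillatory remainder $2\eps\bigl(\bar A-A(\cdot/\eps)\bigr):D^2z$ at order $\eps$, which would force another round of correctors. The paper sidesteps this by introducing $z^\eps$ with $-A(\cdot/\eps):D^2z^\eps=-h$ exactly, carrying out the bulk $W^{1,\infty}$ estimate with $z^\eps$ in place of $z$ via the uniform $C^{1,\alpha}$ estimate (Lemma~\ref{C1alpha est}), and only at the end swapping $z^\eps$ for $z$ using Lemma~\ref{Lemma rate eps}, which costs $\calO(\eps)$ in $W^{1,p}$ and hence $\calO(\eps^2)$ after the prefactor $\eps$. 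This decoupling keeps the corrector hierarchy from iterating.
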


\begin{remark}[$L^{\infty}$ estimate, gradient estimate and Hessian estimate]\label{rmrk some estimates}
In the situation of Theorem \ref{Main theorem}, the following assertions hold.
\begin{itemize}
\item[(i)] $L^{\infty}$ bound: An inspection of the proof, see \eqref{a W1infty estimate} and \eqref{zeps to z rate}, yields that
\begin{align}\label{Guo-Tran-Yu}
\|u^{\eps}-u+2\eps z\|_{L^{\infty}(\Omega)}=\calO(\eps^2).
\end{align}
Hence, we recover the result on optimal $L^{\infty}$ rates from Theorem \ref{Theorem Guo-Tran-Yu} under these weaker assumptions on the coefficient $A$ and the right-hand side $f$. 
\item[(ii)] Gradient bound: For all $p\in (1,\infty)$, we have
\begin{align}\label{gradestimate}
\left\|\nabla u^{\eps}- \nabla u +2\eps  \nabla z - \eps \sum_{i,j=1}^n  \nabla v^{ij}\left(\frac{\cdot}{\eps}\right)\partial^2_{ij}u  \right\|_{L^p(\Omega)}=\calO(\eps^{1+\frac{1}{p}}).
\end{align}
\item[(iii)] Hessian bound: In view of Theorem \ref{Capdeboscq-Sprekeler-Suli}, for all $p\in (1,\infty)$, there holds
\begin{align}\label{hessianestimate}
\left\|D^2 u^{\eps}- D^2 u -\sum_{i,j=1}^n  D^2 v^{ij}\left(\frac{\cdot}{\eps}\right)\partial^2_{ij}u  \right\|_{L^p(\Omega)}=\calO(\eps^{\frac{1}{p}}).
\end{align}
\end{itemize}
\end{remark}

An essential role in the proof plays the boundary corrector $\theta^{\eps}$, which is defined to be the solution to the following problem with oscillations in the boundary data:
\begin{align}\label{thetaeps problem}
\left\{\begin{aligned}-A\left(\frac{\cdot}{\eps}\right):D^2 \theta^{\eps} &= 0& &\text{in }\Omega,\\
\theta^{\eps} &= -V\left(\frac{\cdot}{\eps}\right):D^2 u&  &\text{on }\partial\Omega.\end{aligned}\right.
\end{align}

We then have the following result on the asymptotic behavior of the boundary corrector under the reduced regularity $f\in W^{2,q}(\Omega)$ for some $q>n$:

\begin{lemma}[Boundary corrector $W^{1,p}$ bound]\label{lemma boundary corrector}
Assume that $A\in C^{0,\alpha}(\mathbbm{T}^n;\calS^n_{+})$ for some $\alpha\in (0,1]$ and $f\in W^{2,q}(\Omega)$ for some $q>n$. Further, let $\theta^{\eps}$ be the solution to the problem  \eqref{thetaeps problem}. Then, for all $p\in (1,\infty)$, we have that
\begin{align}\label{thetaeps estimate}
\eps\left\| \theta^{\eps}\right\|_{W^{1,p}(\Omega)}=\calO(\eps^{\frac{1}{p}}).
\end{align}
\end{lemma}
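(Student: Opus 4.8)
The plan is to estimate the boundary corrector $\theta^{\eps}$ by comparing it with a simpler auxiliary function whose boundary values match those of $\theta^{\eps}$ and whose behavior we can control explicitly, and then to invoke $W^{1,p}$ estimates for nondivergence-form equations together with a trace/interpolation inequality localized near $\partial\Omega$. First I would recall that $V(\cdot/\eps)$ is uniformly bounded (the correctors $v^{ij}$ are H\"{o}lder continuous on $\mathbbm{T}^n$ by Schauder-type estimates applied to \eqref{vij problem}, since $A\in C^{0,\alpha}$), and that $D^2u$ is bounded on $\bar\Omega$ because $f\in W^{2,q}(\Omega)$ with $q>n$ gives $u\in W^{4,q}(\Omega)\hookrightarrow C^{3,\beta}(\bar\Omega)$ via elliptic regularity for the constant-coefficient operator $-\bar A:D^2$. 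Hence the boundary data $g^{\eps}:=-V(\cdot/\eps):D^2u$ on $\partial\Omega$ is uniformly bounded, but crucially it oscillates: its tangential derivatives along $\partial\Omega$ are $\calO(\eps^{-1})$ in $L^\infty$, while the function itself is $\calO(1)$.

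The key step is to produce a good extension. I would let $\eta^{\eps}$ be a function on $\bar\Omega$ that equals $g^{\eps}$ on $\partial\Omega$, is supported in an $\calO(\eps)$-neighborhood $\Omega_\eps:=\{x\in\Omega:\dist(x,\partial\Omega)<\eps\}$ of the boundary, and satisfies the natural scaling bounds $\|\eta^{\eps}\|_{L^\infty(\Omega)}=\calO(1)$, $\|\nabla\eta^{\eps}\|_{L^\infty(\Omega)}=\calO(\eps^{-1})$ — this is the standard boundary-layer cutoff construction using the distance function and a rescaled profile. Because $|\Omega_\eps|=\calO(\eps)$, this immediately gives $\|\eta^{\eps}\|_{L^p(\Omega)}=\calO(\eps^{1/p})$ and $\|\nabla\eta^{\eps}\|_{L^p(\Omega)}=\calO(\eps^{-1}\cdot\eps^{1/p})=\calO(\eps^{1/p-1})$, so that $\eps\|\eta^{\eps}\|_{W^{1,p}(\Omega)}=\calO(\eps^{1/p})$, which is exactly the target rate. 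Then $w^{\eps}:=\theta^{\eps}-\eta^{\eps}$ solves $-A(\cdot/\eps):D^2w^{\eps}=A(\cdot/\eps):D^2\eta^{\eps}=:F^{\eps}$ in $\Omega$ with $w^{\eps}=0$ on $\partial\Omega$. The inhomogeneity $F^{\eps}$ involves second derivatives of $\eta^{\eps}$, which are $\calO(\eps^{-2})$ on the thin strip, so $F^{\eps}$ is only $\calO(\eps^{-2})$ in $L^\infty$ and $\calO(\eps^{-2+1/p})$ in $L^p$ — too large to apply $W^{2,p}$ estimates directly and recover a useful $W^{1,p}$ bound on $w^{\eps}$.

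The main obstacle, therefore, is handling the rough right-hand side $F^{\eps}$. The way around it is to avoid ever differentiating $\eta^{\eps}$ twice: instead of the strong formulation I would work with $\theta^{\eps}$ directly via the global maximum principle and $C^{1,\gamma}$ (or $W^{1,p}$) estimates for the homogeneous equation with given boundary data. Precisely, the Alexandrov–Bakelman–Pucci / maximum principle gives $\|\theta^{\eps}\|_{L^\infty(\Omega)}\le\|g^{\eps}\|_{L^\infty(\partial\Omega)}=\calO(1)$; this is the cheap part. For the gradient, I would use an interior-to-boundary barrier argument: near a boundary point, comparison with suitable barriers adapted to the oscillating data yields $|\nabla\theta^{\eps}(x)|\le C(\eps^{-1}+\dist(x,\partial\Omega)^{-1})$-type bounds away from the boundary layer, combined with the observation that $\theta^{\eps}$ decays into the interior. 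Integrating such a pointwise gradient bound in $L^p$ over $\Omega$ — the singular part being integrable for $p$ finite once one accounts for the correct powers, and the bulk contribution being lower order because $\theta^{\eps}$ is harmonic-like and small in the interior — produces $\|\nabla\theta^{\eps}\|_{L^p(\Omega)}=\calO(\eps^{1/p-1})$, hence \eqref{thetaeps estimate}. The delicate points are: (a) making the barrier construction uniform in $\eps$ despite the merely H\"{o}lder-continuous, oscillating coefficient $A(\cdot/\eps)$, which is where one uses that $A\in C^{0,\alpha}$ uniformly and $\partial\Omega$ is smooth so that flattening the boundary is harmless; and (b) bookkeeping the exact power of $\eps$ so the thin-strip volume factor $\eps$ and the gradient blow-up factor $\eps^{-1}$ combine to give precisely $\eps^{1/p}$ after multiplication by the prefactor $\eps$. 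I expect (a) to be the real work; (b) is routine once (a) is in place.
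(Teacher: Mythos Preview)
Your extension construction is exactly the right one, and your diagnosis of the obstacle---that $A(\cdot/\eps):D^2\eta^{\eps}$ is $\calO(\eps^{-2+1/p})$ in $L^p$, one power of $\eps$ too large---is correct. But the remedy you propose (abandon the extension and switch to barriers) misses the simple fix that the paper uses: \emph{transform the equation into divergence form}. Multiplying by the invariant measure and adding a skew-symmetric corrector turns $-A(\cdot/\eps):D^2\theta^{\eps}=0$ into $-\nabla\cdot(A^{\mathrm{div}}(\cdot/\eps)\nabla\theta^{\eps})=0$ with $A^{\mathrm{div}}\in C^{0,\alpha}$ uniformly elliptic. Now the difference $\tilde\theta^{\eps}:=\theta^{\eps}+\eta\,V(\cdot/\eps):D^2u$ (your $w^{\eps}$ up to sign) satisfies a divergence-form equation whose right-hand side is $-\nabla\cdot F^{\eps}$ with $F^{\eps}=A^{\mathrm{div}}(\cdot/\eps)\nabla[\eta\,V(\cdot/\eps):D^2u]$. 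The uniform $W^{1,p}$ estimate of Avellaneda--Lin for divergence-form problems (Lemma~\ref{W1p est for divform}) needs only $\|F^{\eps}\|_{L^p}$, i.e.\ \emph{first} derivatives of the extension, and your own bounds give $\|F^{\eps}\|_{L^p}=\calO(\eps^{-1+1/p})$---precisely what is needed. The missing idea is thus one line: go to divergence form so that the natural estimate consumes one derivative fewer.

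Your barrier alternative, by contrast, has a genuine gap. The pointwise bound you sketch, $|\nabla\theta^{\eps}(x)|\le C(\eps^{-1}+\dist(x,\partial\Omega)^{-1})$, cannot yield $\|\nabla\theta^{\eps}\|_{L^p}=\calO(\eps^{1/p-1})$ by direct integration, since $\int_{\Omega}\dist(x,\partial\Omega)^{-p}\,dx$ diverges for every $p\ge 1$. You acknowledge needing that ``$\theta^{\eps}$ decays into the interior'', but establishing such decay is exactly the quantitative homogenization of the oscillating-boundary problem \eqref{thetaeps problem}, which is substantially harder than the lemma itself (cf.\ the references on boundary-layer homogenization the paper cites but explicitly does \emph{not} invoke). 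So the barrier route, as written, replaces a one-line trick by an unproved and difficult input.
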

\begin{remark}[Boundary corrector $W^{2,p}$ bound \cite{CSS20}]
In the situation of Theorem \ref{Capdeboscq-Sprekeler-Suli}, there holds 
\begin{align}\label{thetaeps hessestimate}
\eps^2\left\|\theta^{\eps}\right\|_{W^{2,p}(\Omega)}=\calO(\eps^{\frac{1}{p}}).
\end{align}
\end{remark}

Let us remark that the estimate \eqref{thetaeps estimate} for $p=2$ has been shown in \cite{AA99,OV12} in the context of divergence-form homogenization by energy estimates. 
It is worth noting here that we only obtain $W^{1,p}$ and $W^{2,p}$ bounds for the boundary corrector $\theta^\eps$, and we do not study qualitative and quantitative homogenization of \eqref{thetaeps problem} (for the latter see e.g., \cite{AKM17,FK17,GM12}). 

Another important ingredient in the proof of Theorem \ref{Main theorem} is the proof of the $\calO(\eps)$ rate for the convergence to the homogenized solution in $W^{1,p}(\Omega)$ under the reduced regularity $f\in W^{2,q}(\Omega)$ for some $q>n$:
\begin{lemma}[$W^{1,p}$ convergence rate $\calO(\eps)$]\label{Lemma rate eps}
Assume that $A\in C^{0,\alpha}(\mathbbm{T}^n;\calS^n_{+})$ for some $\alpha\in (0,1]$ and $f\in W^{2,q}(\Omega)$ for some $q>n$. Let $u^{\eps}$ and $u$ be the solutions to \eqref{ueps problem} and \eqref{u problem} respectively. Then, for any $p\in (1,\infty)$, we have
\begin{align*}
\|u^{\eps}- u \|_{W^{1,p}(\Omega)}=\calO(\eps).
\end{align*}
\end{lemma}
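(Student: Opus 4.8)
The plan is to combine the $W^{2,p}$ corrector estimate of Theorem 1.3 with the $W^{1,p}$ bound on the boundary corrector from Lemma 1.3, exploiting the fact that the first-order oscillatory corrector term has small $W^{1,p}$-norm even though its $W^{2,p}$-norm does not decay. Write $u = u - \varepsilon^2 V(\cdot/\varepsilon):D^2 u + \varepsilon^2 V(\cdot/\varepsilon):D^2 u$, so that by the triangle inequality
\begin{align*}
\|u^{\varepsilon}-u\|_{W^{1,p}(\Omega)} \leq \left\|u^{\varepsilon}-u-\varepsilon^2 V\!\left(\tfrac{\cdot}{\varepsilon}\right):D^2 u\right\|_{W^{1,p}(\Omega)} + \varepsilon^2\left\| V\!\left(\tfrac{\cdot}{\varepsilon}\right):D^2 u\right\|_{W^{1,p}(\Omega)}.
\end{align*}
Since $\|g(\cdot/\varepsilon)\|_{L^p}=\mathcal{O}(1)$ and $\|\nabla(g(\cdot/\varepsilon))\|_{L^p}=\mathcal{O}(\varepsilon^{-1})$ for periodic $g$, and since $V(\cdot/\varepsilon):D^2 u$ involves a product whose derivative picks up at most one factor of $\varepsilon^{-1}$, the second term is $\mathcal{O}(\varepsilon^2)\cdot\mathcal{O}(\varepsilon^{-1}) = \mathcal{O}(\varepsilon)$, using $u\in W^{3,q}$ (hence $D^2u\in W^{1,q}\hookrightarrow L^\infty$ and $\nabla D^2 u\in L^p$) under the assumption $f\in W^{2,q}(\Omega)$, $q>n$. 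The issue is that Theorem 1.3 controls the first term only in $W^{2,p}$ with rate $\varepsilon^{1/p}$, which is \emph{not} the bound we want; we need a genuinely better estimate for the $W^{1,p}$-norm of $w^\varepsilon := u^{\varepsilon}-u-\varepsilon^2 V(\cdot/\varepsilon):D^2 u$.

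So the real work is to bound $\|w^\varepsilon\|_{W^{1,p}(\Omega)}$ directly. The standard computation shows that $w^\varepsilon$ solves a nondivergence-form equation $-A(\cdot/\varepsilon):D^2 w^\varepsilon = F^\varepsilon$ in $\Omega$ with a boundary condition $w^\varepsilon = -\varepsilon^2 V(\cdot/\varepsilon):D^2 u$ on $\partial\Omega$, where $F^\varepsilon$ is an explicit expression. One expands $-A(\cdot/\varepsilon):D^2(\varepsilon^2 V(\cdot/\varepsilon):D^2 u)$ using the product rule; the $\varepsilon^0$-order term cancels against $f - \bar A:D^2 u = 0$ by the corrector equation \eqref{vij problem}, the $\varepsilon^1$-order terms (involving $\nabla v^{ij}(\cdot/\varepsilon)\,\partial_k\partial^2_{ij}u$) give a right-hand side of size $\mathcal{O}(\varepsilon)$ in $L^p$ plus a term that must be recognized as generating the $z$-correction in the full theorem — but here, where we only want rate $\mathcal{O}(\varepsilon)$ (not the sharper cancellation), we can simply absorb it, and the $\varepsilon^2$-order terms are $\mathcal{O}(\varepsilon^2)$ in $L^p$. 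Then I would split $w^\varepsilon = \varepsilon^2\theta^\varepsilon + \rho^\varepsilon$, where $\theta^\varepsilon$ is the boundary corrector solving \eqref{thetaeps problem} (which handles the oscillatory boundary data) and $\rho^\varepsilon$ solves $-A(\cdot/\varepsilon):D^2\rho^\varepsilon = F^\varepsilon$ with zero boundary data. By the $W^{2,p}$ Calderón–Zygmund estimate for nondivergence-form operators with continuous (here Hölder) coefficients — which holds uniformly in $\varepsilon$ by periodicity and the frozen-coefficient argument — one gets $\|\rho^\varepsilon\|_{W^{2,p}(\Omega)}\lesssim \|F^\varepsilon\|_{L^p(\Omega)} = \mathcal{O}(\varepsilon)$, hence $\|\rho^\varepsilon\|_{W^{1,p}} = \mathcal{O}(\varepsilon)$; and by Lemma 1.3, $\varepsilon^2\|\theta^\varepsilon\|_{W^{1,p}(\Omega)} = \varepsilon\cdot\varepsilon\|\theta^\varepsilon\|_{W^{1,p}} = \varepsilon\cdot\mathcal{O}(\varepsilon^{1/p}) = \mathcal{O}(\varepsilon^{1+1/p}) = o(\varepsilon)$. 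Combining gives $\|w^\varepsilon\|_{W^{1,p}} = \mathcal{O}(\varepsilon)$, and then the triangle inequality above closes the argument.

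The main obstacle, and the step requiring the most care, is the bookkeeping in expanding $-A(\cdot/\varepsilon):D^2(\varepsilon^2 V(\cdot/\varepsilon):D^2 u)$ and verifying that after the cancellation coming from \eqref{vij problem} the remaining right-hand side $F^\varepsilon$ is bounded by $C\varepsilon$ in $L^p(\Omega)$ using only $f\in W^{2,q}(\Omega)$; this is where the regularity $u\in W^{3,q}(\Omega)$, Sobolev embedding $W^{3,q}\hookrightarrow C^2$, and Hölder continuity of $A$ (to control terms like $A(\cdot/\varepsilon):D^2 v^{ij}(\cdot/\varepsilon)$, which is only an $L^p$-bounded, not bounded, quantity since $v^{ij}\in W^{2,p}$ only) all enter. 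A secondary technical point is ensuring the $W^{2,p}$ estimate for the operator $-A(\cdot/\varepsilon):D^2(\cdot)$ is uniform in $\varepsilon$; this follows from the interior and boundary Calderón–Zygmund theory for operators with uniformly continuous coefficients (Gilbarg–Trudinger), noting that $\{A(\cdot/\varepsilon)\}_{\varepsilon>0}$ has a common modulus of continuity because $A\in C^{0,\alpha}(\mathbb{T}^n)$, so the constant depends only on $n,p,\alpha$, the ellipticity constants, $[A]_{C^{0,\alpha}}$, and $\Omega$ — but notably not on $\varepsilon$.
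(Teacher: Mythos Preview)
Your decomposition is exactly the paper's: setting $\rho^{\eps}:=u^{\eps}-u-\eps^2 V(\cdot/\eps):D^2 u-\eps^2\theta^{\eps}$ (your $\rho^\eps$ is the paper's $u^\eps-u-\phi^\eps$), one checks that $\rho^\eps$ solves a homogeneous Dirichlet problem with right-hand side $\eps F_2^\eps$, and the boundary corrector piece is handled by Lemma~\ref{lemma boundary corrector}. The two arguments diverge only in how you estimate $\rho^\eps$.

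The genuine gap is your justification of a uniform-in-$\eps$ $W^{2,p}$ estimate. You write that ``$\{A(\cdot/\eps)\}_{\eps>0}$ has a common modulus of continuity because $A\in C^{0,\alpha}(\mathbbm{T}^n)$'', and invoke Gilbarg--Trudinger. This is false: $[A(\cdot/\eps)]_{C^{0,\alpha}}=\eps^{-\alpha}[A]_{C^{0,\alpha}}$, and more generally for any modulus $\omega_A$ of $A$ one has $|A(x/\eps)-A(y/\eps)|\le \omega_A(|x-y|/\eps)$, which does \emph{not} tend to zero uniformly in $\eps$ as $|x-y|\to 0$. The Calder\'on--Zygmund constants in the Gilbarg--Trudinger frozen-coefficient argument depend precisely on this modulus and therefore blow up as $\eps\to 0$. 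Uniform $W^{2,p}$ (or $C^{1,\alpha}$) estimates for $-A(\cdot/\eps):D^2$ are nonetheless \emph{true}, but they are a deep homogenization result obtained via the Avellaneda--Lin compactness method \cite{AL89}, not a consequence of classical elliptic theory.

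The paper avoids this pitfall by citing the uniform $C^{1,\alpha}$ estimate of \cite{AL89} (Lemma~\ref{C1alpha est}): since $F_2^\eps$ is bounded in $L^q(\Omega)$ for the given $q>n$ (this requires $u\in W^{4,q}$, one order more than the $W^{3,q}$ you use---note the $\partial^4_{ijkl}u$ term), one obtains $\|\rho^\eps\|_{W^{1,\infty}(\Omega)}=\calO(\eps)$, which in particular gives the $W^{1,p}$ bound for \emph{every} $p$. Your $W^{2,p}$ route, even with the correct Avellaneda--Lin citation, would need an additional embedding $W^{2,q}\hookrightarrow W^{1,\infty}$ (using $q>n$) to cover the range $p>q$, since $\partial^4 u$ is only guaranteed to lie in $L^q$. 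The minor remark that $D^2 v^{ij}$ is ``only $L^p$-bounded'' is also off: Schauder theory gives $v^{ij}\in C^{2,\alpha}$ from $A\in C^{0,\alpha}$, so $D^2 v^{ij}$ is bounded.
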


Finally, we demonstrate through numerical experiments that the obtained rates in the previously stated results cannot be improved in general. 

\begin{remark}[Optimality of rates]
The rate $\calO(\eps^{2})$ in the $L^{\infty}(\Omega)$ estimate \eqref{Guo-Tran-Yu}, the rate $\calO(\eps^{1+\frac{1}{p}})$ in the gradient estimate \eqref{gradestimate} and the rate $\calO(\eps^{\frac{1}{p}})$ in the Hessian estimate \eqref{hessianestimate} are optimal in general. Consequently, also the rates in the boundary corrector estimates \eqref{thetaeps estimate} and \eqref{thetaeps hessestimate} are optimal in general.
\end{remark}

For the numerical illustrations we use an explicit $c$-bad matrix (recall Corollary \ref{Cor c-good} for the definition of $c$-bad) and consider a homogenization problem of the form \eqref{ueps problem} with $z\not\equiv 0$. This is the first direct proof of the existence of a $c$-bad matrix.  

\begin{theorem}[Explicit $c$-bad matrix]\label{thm explicit cbad}
The matrix-valued function $A:\R^2\rightarrow \R^{2\times 2}$ given by
\begin{align*}
A(y_1,y_2):=\frac{1}{r(y_1,y_2)}\begin{pmatrix}
1-\frac{1}{2}\sin(2\pi y_1)\sin(2\pi y_2) & 0\\0 & 1+\frac{1}{2}\sin(2\pi y_1)\sin(2\pi y_2)
\end{pmatrix}
\end{align*}
with $r:\R^2\rightarrow\R$ defined by
\begin{align*}
r(y_1,y_2):= 1+\frac{1}{4}(\cos(2\pi y_1)-2\sin(2\pi y_1))\sin(2\pi y_2)
\end{align*}
is $c$-bad. More precisely, there holds $c_1^{11}=c_1^{22}=-\frac{1}{128\pi}$ and $c_j^{kl}=0$ otherwise.
\end{theorem}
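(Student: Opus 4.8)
The plan is to check, by direct computation, that $A$ and $r$ are exactly the objects from the Introduction: $r$ is the invariant measure of $A$, $\bar A=I$, the correctors $v^{kl}$ of \eqref{vij problem} are explicit trigonometric polynomials, and the $c_j^{kl}$ of \eqref{cjkl def} have the claimed values; since $c_1^{11}\neq0$, Corollary \ref{Cor c-good}(ii) then gives that $A$ is $c$-bad. To begin, note that $y_1\mapsto\cos(2\pi y_1)-2\sin(2\pi y_1)$ has amplitude $\sqrt5<3$, so $r\geq 1-\tfrac{\sqrt5}{4}>0$ on $\R^2$; hence $A$ is well defined, $\Z^2$-periodic, of class $C^\infty$ and symmetric (diagonal), with eigenvalues $\frac{1\mp\frac12\sin(2\pi y_1)\sin(2\pi y_2)}{r}$ bounded in a fixed compact subinterval of $(0,\infty)$ since the numerators lie in $[\tfrac12,\tfrac32]$. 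Thus $A\in C^\infty(\mathbbm{T}^2;\calS^2_+)$ is uniformly elliptic and Corollary \ref{Cor c-good} applies. Writing $s:=\sin(2\pi y_1)\sin(2\pi y_2)$, we have $Ar=\mathrm{diag}(1-\tfrac12 s,\,1+\tfrac12 s)$, and using $\partial_{11}s=\partial_{22}s=-(2\pi)^2 s$,
\begin{align*}
-D^2:(Ar)=-\partial_{11}\!\big(1-\tfrac12 s\big)-\partial_{22}\!\big(1+\tfrac12 s\big)=-\tfrac12(2\pi)^2 s+\tfrac12(2\pi)^2 s=0 .
\end{align*}
Since $r>0$ and $\int_Y r=1$ (the oscillatory part of $r$ has zero mean), uniqueness in the characterization of the invariant measure identifies $r$ as the invariant measure of $A$, and then $\bar A=\int_Y Ar=\mathrm{diag}\big(\int_Y(1-\tfrac12 s),\int_Y(1+\tfrac12 s)\big)=I$, i.e. $\bar a_{ij}=\delta_{ij}$.

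The structural point that makes the correctors elementary is that any $\phi$ in $\mathrm{span}\{\cos(2\pi y_1)\sin(2\pi y_2),\ \sin(2\pi y_1)\sin(2\pi y_2)\}$ satisfies $\partial_{11}\phi=\partial_{22}\phi=-(2\pi)^2\phi$, so that
\begin{align*}
-A:D^2\phi=\frac{1}{r}\big[(1-\tfrac12 s)+(1+\tfrac12 s)\big](2\pi)^2\phi=\frac{8\pi^2}{r}\,\phi .
\end{align*}
Because $a_{12}=a_{21}\equiv0=\bar a_{12}$ we get $v^{12}=v^{21}\equiv0$, and a short computation gives
\begin{align*}
a_{11}-\bar a_{11}=\frac{(1-\tfrac12 s)-r}{r}=\frac{-\tfrac14\cos(2\pi y_1)\sin(2\pi y_2)}{r},\qquad a_{22}-\bar a_{22}=\frac{(1+\tfrac12 s)-r}{r}=\frac{s-\tfrac14\cos(2\pi y_1)\sin(2\pi y_2)}{r},
\end{align*}
so the displayed identity yields the zero-mean solutions
\begin{align*}
v^{11}=-\frac{1}{32\pi^2}\cos(2\pi y_1)\sin(2\pi y_2),\qquad v^{22}=\frac{1}{8\pi^2}\sin(2\pi y_1)\sin(2\pi y_2)-\frac{1}{32\pi^2}\cos(2\pi y_1)\sin(2\pi y_2),
\end{align*}
which by uniqueness are the corrector functions of \eqref{vij problem}.

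It then remains to evaluate the constants. Since $Ar=\mathrm{diag}(1-\tfrac12 s,1+\tfrac12 s)$, we have $c_j^{kl}=\int_Y Ae_j\cdot\nabla v^{kl}\,r=\int_Y(Ar)e_j\cdot\nabla v^{kl}$, i.e. $c_1^{kl}=\int_Y(1-\tfrac12 s)\,\partial_1 v^{kl}$ and $c_2^{kl}=\int_Y(1+\tfrac12 s)\,\partial_2 v^{kl}$, with $c_j^{12}=c_j^{21}=0$. Differentiating $v^{11}$ and $v^{22}$ and expanding each integrand into products of $\{\sin(2\pi y_1),\cos(2\pi y_1)\}$ with $\{\sin(2\pi y_2),\cos(2\pi y_2)\}$, every resulting term is either a pure oscillation of vanishing mean over $Y$ or contains a factor $\int_0^1\sin(2\pi t)\cos(2\pi t)\,dt=0$ — with the sole exception of the contribution $\tfrac{1}{16\pi}\int_Y(1-\tfrac12 s)\,s\,dy=-\tfrac{1}{32\pi}\int_Y s^2\,dy=-\tfrac{1}{128\pi}$ (using $\int_Y s^2=\tfrac14$), which appears in $c_1^{11}$ via $\partial_1 v^{11}=\tfrac{1}{16\pi}s$ and in $c_1^{22}$ via the $\tfrac{1}{16\pi}s$-part of $\partial_1 v^{22}$. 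This gives $c_1^{11}=c_1^{22}=-\tfrac{1}{128\pi}$ and $c_j^{kl}=0$ otherwise, so $A$ is $c$-bad by Corollary \ref{Cor c-good}(ii).

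I do not expect any genuine obstacle: the only steps requiring care are the bookkeeping of these elementary trigonometric integrals and the verification that $r$ truly solves the invariant-measure equation. The content of the theorem lies entirely in the choice of the pair $(A,r)$, which is engineered so that $r$ is the invariant measure, $\bar A=I$, and the correctors are one-line trigonometric polynomials.
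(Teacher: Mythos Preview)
Your proposal is correct and follows essentially the same route as the paper's own proof: verify that $r$ is the invariant measure, compute $\bar A=I$, write down the explicit correctors $v^{11},v^{22}$ (with $v^{12}=v^{21}\equiv0$), and then evaluate the integrals defining $c_j^{kl}$. Your write-up is in fact more detailed than the paper's --- you explicitly check positivity of $r$, uniform ellipticity of $A$, and the identity $-A:D^2\phi=\tfrac{8\pi^2}{r}\phi$ on the relevant two-dimensional trigonometric subspace that makes the correctors elementary --- whereas the paper simply states the formula for $V$ and the values of the $c_j^{kl}$ as ``a straightforward calculation''.
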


We briefly explain the organization of the paper:

\subsection{Structure of the paper}

In Section \ref{Sec Proofs}, we prove the main result, i.e., Theorem \ref{Main theorem}. We start by recalling some uniform estimates from the theory of homogenization in Section \ref{subsec uniform est}. Thereafter, we prove Lemmata \ref{lemma boundary corrector} and \ref{Lemma rate eps} in Sections \ref{Sec proof of lemma1} and \ref{Sec proof of lemma2} respectively, and finally the main theorem in Section \ref{Sec Pf of Thm}.

In Section \ref{Sec Num exp}, we provide numerical illustrations of the convergence rates from Remark \ref{rmrk some estimates}. We start by proving Theorem \ref{thm explicit cbad} in Section \ref{Sec expl cbad}, providing an explicit $c$-bad matrix which we use for the numerical experiments. We illustrate the $L^{\infty}$ bound from Remark \ref{rmrk some estimates} (i) in Section \ref{Sec Num ill 1}, the gradient bound from Remark \ref{rmrk some estimates} (ii) in Section \ref{Sec Num ill 2} and the Hessian bound from Remark \ref{rmrk some estimates} (iii) in Section \ref{Sec Num ill 3}. Numerical illustrations comparing $c$-bad and $c$-good problems are provided in Section \ref{Sec Num ill 4}.

Finally in Section \ref{Sec Extensions and concl}, we discuss some extensions to nonsmooth domains and give some concluding remarks.

\section{Proofs of the main results}\label{Sec Proofs}

\subsection{Uniform estimates}\label{subsec uniform est}

Uniform estimates are essential in the theory of homogenization and form the basis for the proofs of the main results. The crucial uniform estimate for the proofs is the uniform $C^{1,\alpha}$ estimate from \cite{AL89} for nondivergence-form homogenization problems.

\begin{lemma}[Theorem 1 in \cite{AL89}]\label{C1alpha est}
Let $\Omega\subset\R^n$ be a bounded $C^{1,\gamma}$ domain. Assume that $A\in C^{0,\alpha}(\mathbbm{T}^n;\calS^n_{+})$ for some $\alpha\in (0,1]$ and $f\in L^q(\Omega)$ for some $q>n$. For $\eps>0$, let $u^{\eps}$ be the solution to the problem \eqref{ueps problem}. Then there exists $\nu\in (0,1]$ such that there holds
\begin{align*}
\|u^{\eps}\|_{C^{1,\nu}(\Omega)}\leq C\|f\|_{L^q(\Omega)}
\end{align*} 
with a constant $C>0$ independent of $\eps$.
\end{lemma}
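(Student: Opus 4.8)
The plan is to use the compactness method of Avellaneda and Lin. By the Alexandrov--Bakelman--Pucci estimate one has $\|u^{\eps}\|_{L^{\infty}(\Omega)}\leq C\|f\|_{L^q(\Omega)}$ with $C$ independent of $\eps$, so after normalizing we may assume $\|u^{\eps}\|_{L^{\infty}(\Omega)}\le 1$ and the content of the lemma is the uniform $C^{0,\nu}$ bound on $\nabla u^{\eps}$; note that for each fixed $\eps$ such a bound is classical (since $A(\cdot/\eps)$ is $C^{0,\alpha}$ and $f\in L^q$, $q>n$), the issue being that the constant a priori degenerates like $\eps^{-\alpha}$. The heart of the argument is an interior \emph{improvement-of-flatness} lemma: there exist $\theta\in(0,\tfrac12)$, $\eps_0>0$ and $C_0>0$, depending only on $n,\alpha,q$ and the ellipticity constants, such that whenever $v$ solves $-A(\cdot/\eps):D^2v=g$ in $B_1$ with $\eps\le\eps_0$, $\fint_{B_1}|v|^2\le1$ and $\|g\|_{L^q(B_1)}\le\eps_0$, there is an affine function $\ell$ with $|\nabla\ell|+|\ell(0)|\le C_0$ and $\bigl(\fint_{B_\theta}|v-\ell|^2\bigr)^{1/2}\le\theta^{1+\nu}$.

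I would prove this by contradiction. If it fails, one obtains sequences $\eps_k\to0$ (the case $\eps_k$ bounded below is excluded, since then the coefficients $A(\cdot/\eps_k)$ lie in a fixed compact set of continuous functions and standard Calderón--Zygmund/Schauder estimates for non-divergence equations with continuous coefficients apply directly), $g_k\to0$ in $L^q(B_1)$, and solutions $v_k$ with $\fint_{B_1}|v_k|^2\le1$ admitting no such $\ell$. The interior Krylov--Safonov $C^{\delta}$ estimate, whose constants depend only on ellipticity and dimension, makes $(v_k)$ precompact in $C(\overline{B_{3/4}})$; the limit $v_\infty$ solves the homogenized equation $-\bar A:D^2 v_\infty=0$ in $B_{3/4}$, which is precisely the qualitative homogenization of the non-divergence problem via the invariant measure $r$ and the correctors \eqref{vij problem}. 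Since $\bar A$ is constant and uniformly elliptic, $v_\infty\in C^{1,1}_{\mathrm{loc}}(B_{3/4})$ with $\|v_\infty\|_{C^{1,1}(B_{1/2})}\le C$. Taking $\ell(x):=v_\infty(0)+\nabla v_\infty(0)\cdot x$, choosing $\theta$ small so that the $C^{1,1}$ bound forces $\fint_{B_\theta}|v_\infty-\ell|^2\le\tfrac14\theta^{2(1+\nu)}$ (this forces $\nu<1$; later steps also require $\nu<\alpha$, $\nu\le1-n/q$ and $\nu\le\gamma$), and then using $v_k\to v_\infty$ to absorb the remaining error on $B_\theta$ for large $k$, one contradicts the assumed failure.

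The iteration then exploits that the equation is invariant under $v\mapsto v(r\,\cdot)$, which replaces $\eps$ by $\eps/r$. Applying the one-step lemma repeatedly at scales $\theta^m$ as long as $\theta^m\ge\eps/\eps_0$ produces affine functions $\ell_m$ with $\bigl(\fint_{B_{\theta^m}}|u^{\eps}-\ell_m|^2\bigr)^{1/2}\le C\theta^{m(1+\nu)}$ and $|\nabla\ell_{m+1}-\nabla\ell_m|\le C\theta^{m\nu}$, so $\nabla\ell_m$ converges geometrically and a Campanato-type bound $\bigl(\fint_{B_r(x_0)}|u^{\eps}-\ell_{x_0,r}|^2\bigr)^{1/2}\le Cr^{1+\nu}$ holds for all $\eps/\eps_0\le r\le\tfrac12$. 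For $r\le\eps$ one instead rescales $\hat u(y):=u^{\eps}(\eps y)$, which solves an equation with the \emph{fixed} $C^{0,\alpha}$ (hence VMO) coefficient $A$ and right-hand side of controlled $L^q$ norm; the interior $W^{2,q}$ estimate and the embedding $W^{2,q}\hookrightarrow C^{1,1-n/q}$ give the same bound there, and scaling back and matching at $r\sim\eps$ closes the gap, yielding the interior estimate $\|u^{\eps}\|_{C^{1,\nu}(B_{1/2})}\le C\|f\|_{L^q}$ with $C$ independent of $\eps$. Finally, since $\Omega$ is a bounded $C^{1,\gamma}$ domain and $u^{\eps}=0$ on $\partial\Omega$, one flattens the boundary near a point and runs the analogous compactness/iteration argument with affine functions replaced by those vanishing on the flattened boundary, using the $C^{1,\nu}$ boundary regularity for the constant-coefficient homogenized problem with zero Dirichlet data on a $C^{1,\gamma}$ domain; a covering argument combines interior and boundary estimates into $\|u^{\eps}\|_{C^{1,\nu}(\overline{\Omega})}\le C\|f\|_{L^q(\Omega)}$. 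I expect the main obstacle to be the compactness step --- specifically, justifying that the $C^{\delta}$-limit of the oscillatory solutions solves the homogenized equation (the qualitative non-divergence homogenization through the invariant measure), together with the bookkeeping of the exponent loss so that one $\nu\in(0,1]$ works uniformly across all scales, including the sub-$\eps$ regime where only $C^{0,\alpha}$ regularity of the coefficients is available.
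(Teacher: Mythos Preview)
The paper does not give its own proof of this lemma; it is stated as Theorem~1 in \cite{AL89} and used as a black box. Your sketch is essentially an outline of the Avellaneda--Lin compactness method from that very reference, and the ingredients you list (ABP for the $L^{\infty}$ bound, the one-step improvement-of-flatness via contradiction and qualitative homogenization, geometric iteration down to scale $\eps$, a rescaled $W^{2,q}$ estimate below scale $\eps$, and a boundary analogue by flattening) are the correct ones; in that sense your proposal is consistent with the paper's approach, which is simply to invoke \cite{AL89}.
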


For the proof of Lemma \ref{lemma boundary corrector} it turns out to be useful to transform the problem \eqref{thetaeps problem} into divergence-form and use the uniform $W^{1,p}$ estimate from \cite{AL91} for divergence-form homogenization problems.

\begin{lemma}[Theorem C in \cite{AL91}]\label{W1p est for divform}
Let $\Omega\subset\R^n$ be a bounded $C^{2,\gamma}$ domain. Assume that $A^{\mathrm{div}}\in C^{0,\alpha}(\mathbbm{T}^n;\R^{n\times n})$ for some $\alpha\in (0,1]$ is a uniformly elliptic coefficient, $F\in L^p(\Omega)$ and $g\in W^{1,p}(\Omega)$ for some $p\in (1,\infty)$. For $\eps\in (0,1]$, let $\rho^{\eps}\in W^{1,p}(\Omega)$ be the solution to the problem
\begin{align*}
\left\{\begin{aligned}-\nabla\cdot \left(A^{\mathrm{div}}\left(\frac{\cdot}{\eps}\right)\nabla \rho^{\eps}\right) &= -\nabla\cdot F& &\text{in }\Omega,\\
\rho^{\eps} &= g&  &\text{on }\partial\Omega.\end{aligned}\right.
\end{align*}
Then we have the estimate
\begin{align*}
\|\rho^{\eps}\|_{W^{1,p}(\Omega)}\leq C\left(\|F\|_{L^p(\Omega)}+\|g\|_{W^{1,p}(\Omega)}\right)
\end{align*}
with a constant $C>0$ independent of $\eps$.
\end{lemma}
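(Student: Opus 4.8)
Since this lemma is Theorem C in \cite{AL91}, in the present paper it is used as a black box; here I outline the structure of a proof. The strategy rests on two ingredients: the uniform (in $\eps$) interior and boundary Lipschitz estimate of Avellaneda--Lin for the rescaled divergence-form operator $-\nabla\cdot(A^{\div}(\cdot/\eps)\nabla\,\cdot\,)$, and a real-variable $L^p$ extrapolation argument of Caffarelli--Peral/Shen type upgrading $L^\infty$-control of the gradient on balls to $L^p$-control.

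\textbf{Step 1 (uniform Lipschitz estimate).} First I would establish that there is $C$, independent of $\eps\in(0,1]$, such that whenever $w$ solves $-\nabla\cdot(A^{\div}(\cdot/\eps)\nabla w)=0$ in $\Omega\cap B_{2r}(x_0)$ with $w=0$ on $\partial\Omega\cap B_{2r}(x_0)$ (and with no boundary condition imposed in the interior case $B_{2r}(x_0)\subset\Omega$), then $\|\nabla w\|_{L^\infty(\Omega\cap B_r(x_0))}\leq \tfrac{C}{r}\big(\tfrac{1}{|B_{2r}(x_0)|}\int_{\Omega\cap B_{2r}(x_0)}|w|^2\big)^{1/2}$. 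This is the classical three-step compactness argument: an approximation lemma comparing $w$ with the solution of the limiting constant-coefficient equation (which has $C^{1,\sigma}$ interior and, since $\partial\Omega\in C^{2,\gamma}$, boundary estimates); a one-step geometric decay of the normalized excess over affine functions at a fixed dyadic ratio; iteration of this decay down to scale $\eps$; and, below scale $\eps$, classical Schauder/De Giorgi--Nash--Moser $C^{1,\sigma}$ theory applied to the H\"older coefficient obtained after blow-up. Near $\partial\Omega$ one flattens the boundary using its $C^{2,\gamma}$ charts and runs the same scheme with the associated boundary correctors.

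\textbf{Step 2 ($W^{1,p}$ via a real-variable argument).} Given the Lipschitz estimate, I would localize and split $\rho^\eps=\rho_1^\eps+\rho_2^\eps$ on each ball $2B$: let $\rho_1^\eps$ solve the same equation with right-hand side $-\nabla\cdot(F\one_{2B})$ and zero boundary data (controlled in $L^p$ by Calder\'on--Zygmund estimates for the constant-coefficient comparison operator together with a perturbation bound), and set $\rho_2^\eps=\rho^\eps-\rho_1^\eps$, which is $A^{\div}(\cdot/\eps)$-harmonic on $2B$, so that Step 1 gives $\|\nabla\rho_2^\eps\|_{L^\infty(B)}\lesssim \big(\tfrac{1}{|2B|}\int_{2B}|\nabla\rho_2^\eps|^2\big)^{1/2}$. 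This is exactly the hypothesis of Shen's real-variable lemma (a good-$\lambda$/stopping-time statement), whose conclusion is $\int_\Omega|\nabla\rho^\eps|^p\lesssim\int_\Omega|F|^p$ for every $p\in(1,\infty)$; the boundary datum $g$ is then incorporated by subtracting a fixed $W^{1,p}(\Omega)$-extension of $g$ and absorbing the resulting lower-order forcing term, and Poincar\'e's inequality converts the gradient bound into the full $W^{1,p}$ bound with $\eps$-independent constant.

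\textbf{Main obstacle.} The real difficulty lies in Step 1, and especially in its boundary version: the compactness and iteration must be carried out all the way up to $\partial\Omega$, which is what forces the $C^{2,\gamma}$ (rather than merely $C^{1,\gamma}$) regularity assumption on the domain and requires uniform-in-$\eps$ control of the boundary correctors and of the flattening diffeomorphism. The interior estimate and the real-variable Step 2 are comparatively routine, modulo careful bookkeeping of the dependence of the constants on $F$ and $g$.
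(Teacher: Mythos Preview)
The paper does not give its own proof of this lemma: it is quoted verbatim as Theorem~C of \cite{AL91} and used as a black box, exactly as you note in your opening sentence. So there is no proof in the paper to compare your outline against.

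Your sketch is a correct modern route to the estimate, but it is worth flagging that it is not the route actually taken in \cite{AL91}. The original Avellaneda--Lin argument proceeds by first establishing uniform pointwise bounds on the Green's function $G_\eps(x,y)$ and its gradient $\nabla_x G_\eps(x,y)$ (these follow from the uniform interior and boundary $C^{0,\alpha}$ and Lipschitz estimates obtained by the compactness method in their earlier work), and then feeding those kernel bounds into classical Calder\'on--Zygmund theory for singular integrals. The Caffarelli--Peral/Shen real-variable extrapolation you describe in Step~2 is a later and more flexible alternative that bypasses the Green's function; it gives the same conclusion and is arguably cleaner, but it is a genuinely different mechanism from the one in the cited reference.

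One small inaccuracy in your ``main obstacle'' paragraph: the boundary Lipschitz estimate in the Avellaneda--Lin compactness scheme does not itself force $C^{2,\gamma}$ regularity of $\partial\Omega$; $C^{1,\alpha}$ is enough for that step, since one only needs $C^{1,\sigma}$ estimates for the constant-coefficient comparison problem up to the (flattened) boundary. The $C^{2,\gamma}$ hypothesis in the lemma as stated is simply the assumption under which Theorem~C in \cite{AL91} is formulated, not a sharp threshold dictated by the compactness argument.
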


With the uniform estimates at hand, we can prove the main results. We start with the proof of Lemma \ref{lemma boundary corrector}.

\subsection{Proof of Lemma \ref{lemma boundary corrector}}\label{Sec proof of lemma1}

The main ingredient for the proof of Theorem \ref{Main theorem} is the asymptotic behavior of the boundary corrector, i.e., the solution $\theta^{\eps}$ to the problem \eqref{thetaeps problem}. We start by proving Lemma \ref{lemma boundary corrector} and it turns out to be useful to transform the problem \eqref{thetaeps problem} into the divergence-form problem
\begin{align*}
\left\{\begin{aligned}-\nabla\cdot \left(A^{\mathrm{div}}\left(\frac{\cdot}{\eps}\right)\nabla\theta^{\eps}\right) &= 0& &\text{in }\Omega,\\
\theta^{\eps} &= -V\left(\frac{\cdot}{\eps}\right):D^2 u&  &\text{on }\partial\Omega,\end{aligned}\right.
\end{align*}
with a coefficient $A^{\mathrm{div}}\in C^{0,\alpha}(\mathbbm{T}^n;\R^{n\times n})$ for some $\alpha\in (0,1]$ that is uniformly elliptic. Indeed, this is a well-known reduction procedure and can be achieved by multiplication of the equation \eqref{thetaeps problem} with the invariant measure and addition of a suitable skew-symmetric matrix; see \cite{AL89}.

\begin{proof}[Proof of Lemma \ref{lemma boundary corrector}]
Firstly note that, as $f\in W^{2,q}(\Omega)$ for some $q>n$, we have $u\in W^{4,q}(\Omega)$ for some $q>n$ and hence also $u\in W^{3,\infty}(\Omega)$. We further note that, as $A\in C^{0,\alpha}(\R^n)$ for some $\alpha\in (0,1]$, we have $V\in C^{2,\alpha}(\R^n)$ by elliptic regularity theory \cite{GT01}. We need to show that 
\begin{align}\label{thetaeps gradestimate}
\eps\| \theta^{\eps}\|_{W^{1,p}(\Omega)} = \calO(\eps^{\frac{1}{p}})
\end{align}
for any $p\in (1,\infty)$. To this end, we let $\eta\in C_c^{\infty}(\R^n)$ be a cut-off function with the properties $0\leq \eta\leq 1$,
\begin{align*}
\begin{aligned}
\eta &\equiv 1& &\text{in}&\left\{x\in \Omega:\mathrm{dist}(x,\partial\Omega)<\frac{\eps}{2}\right\},\\ \eta &\equiv 0& &\text{in}&\left\{x\in \Omega:\mathrm{dist}(x,\partial\Omega)\geq \eps \right\},
\end{aligned}
\end{align*}
and $\lvert \nabla \eta\rvert =\calO(\eps^{-1})$. Note that this implies that
\begin{align}\label{eta asym behav}
\left\| \eta\right\|_{L^p(\Omega)}+\eps \left\| \nabla\eta\right\|_{L^p(\Omega)}=\calO(\eps^{\frac{1}{p}})
\end{align}
for any $p\in (1,\infty)$. We then define the function
\begin{align*}
\tilde{\theta}^{\eps} := \theta^{\eps}+\eta \,V\left(\frac{\cdot}{\eps}\right):D^2 u
\end{align*}
and note that it is the solution to the problem
\begin{align*}
\left\{\begin{aligned}-\nabla\cdot \left(A^{\mathrm{div}}\left(\frac{\cdot}{\eps}\right)\nabla\tilde{\theta}^{\eps}\right) &= -\nabla \cdot F^{\eps}_1& &\text{in }\Omega,\\
\tilde{\theta}^{\eps} &= 0&  &\text{on }\partial\Omega,\end{aligned}\right.
\end{align*}
with $F^{\eps}_1$ given by
\begin{align*}
F^{\eps}_1:= A^{\mathrm{div}}\left(\frac{\cdot}{\eps}\right)\nabla\left[\eta \,V\left(\frac{\cdot}{\eps}\right):D^2 u \right].
\end{align*}
Using the uniform $W^{1,p}$ estimate from Lemma \ref{W1p est for divform}, we find that for $\eps\in (0,1]$ and any $p\in (1,\infty)$, we have
\begin{align*}
\|\tilde{\theta}^{\eps}\|_{W^{1,p}(\Omega)}\leq C  \| F^{\eps}_1\|_{L^p(\Omega)}\leq C  \left\| \eta \,V\left(\frac{\cdot}{\eps}\right):D^2 u  \right\|_{W^{1,p}(\Omega)}.
\end{align*}
Therefore, by the triangle inequality, we obtain the estimate
\begin{align}\label{eps norm grad thetaeps}
\|\theta^{\eps}\|_{W^{1,p}(\Omega)}\leq C \left\| \eta \,V\left(\frac{\cdot}{\eps}\right):D^2 u  \right\|_{W^{1,p}(\Omega)}.
\end{align}
As we have the bound
\begin{align*}
\left\|V\left(\frac{\cdot}{\eps}\right):D^2 u  \right\|_{L^{\infty}(\Omega)}+ \eps \left\| \nabla\left[V\left(\frac{\cdot}{\eps}\right):D^2 u \right] \right\|_{L^{\infty}(\Omega)}=\calO(1),
\end{align*}
and the asymptotic behavior of the cut-off \eqref{eta asym behav}, we deduce from \eqref{eps norm grad thetaeps} that there holds
\begin{align*}
\eps\|\theta^{\eps}\|_{W^{1,p}(\Omega)}&\leq C\eps\left(\left\| \nabla\eta\right\|_{L^p(\Omega)}+ \left(\eps^{-1}+1\right) \left\| \eta\right\|_{L^p(\Omega)}  \right) =\calO(\eps^{\frac{1}{p}}),
\end{align*}
which is precisely the claimed bound \eqref{thetaeps gradestimate}.
\end{proof}

\subsection{Proof of Lemma \ref{Lemma rate eps}}\label{Sec proof of lemma2}

The second ingredient in the proof of Theorem \ref{Main theorem} is the proof of the $\calO(\eps)$ rate for the convergence of $u^{\eps}$ to $u$ under the reduced regularity assumption $f\in W^{2,q}(\Omega)$ for some $q>n$.

\begin{proof}[Proof of Lemma \ref{Lemma rate eps}]
We need to show that for any $p\in (1,\infty)$, there holds
\begin{align}\label{claimed}
\|u^{\eps}- u \|_{W^{1,p}(\Omega)}=\calO(\eps).
\end{align}
With the corrector matrix $V=(v^{ij})_{1\leq i,j\leq n}$ given by \eqref{vij problem} and the boundary corrector $\theta^{\eps}$ given by \eqref{thetaeps problem}, we let
\begin{align}\label{phieps}
\phi^{\eps}:= \eps^2\left[V\left(\frac{\cdot}{\eps}\right):D^2 u+\theta^{\eps}\right].
\end{align}
Then we have that the function $u^{\eps}-u-\phi^{\eps}$ satisfies the problem
\begin{align*}
\left\{\begin{aligned}-A\left(\frac{\cdot}{\eps}\right):D^2 (u^{\eps}-u-\phi^{\eps}) &= \eps F^{\eps}_2& &\text{in }\Omega,\\
u^{\eps}-u-\phi^{\eps} &= 0&  &\text{on }\partial\Omega,\end{aligned}\right.
\end{align*}
with $F^{\eps}_2$ given by
\begin{align*}
F^{\eps}_2:=\sum_{i,j,k,l=1}^n a_{ij}\left(\frac{\cdot}{\eps}\right) \left[2 \partial_i v^{kl}\left(\frac{\cdot}{\eps}\right)\partial^3_{jkl}u+\eps v^{kl}\left(\frac{\cdot}{\eps}\right)\partial^4_{ijkl}u\right].
\end{align*}
As $f\in W^{2,q}(\Omega)$ for some $q>n$, we have $u\in W^{4,q}(\Omega)$ and hence $F^{\eps}_2$ is uniformly bounded in $L^q(\Omega)$. By the uniform estimate from Lemma \ref{C1alpha est}, we have that
\begin{align*}
\|u^{\eps}-u-\phi^{\eps}\|_{W^{1,\infty}(\Omega)}\leq C \eps \|F^{\eps}_2\|_{L^q(\Omega)}=\calO(\eps).
\end{align*}
Finally, by the triangle inequality and Lemma \ref{lemma boundary corrector}, we can conclude that
\begin{align*}
\|u^{\eps}- u \|_{W^{1,p}(\Omega)}\leq \|u^{\eps}-u-\phi^{\eps}\|_{W^{1,p}(\Omega)}+\|\phi^{\eps}\|_{W^{1,p}(\Omega)}=\calO(\eps),
\end{align*}
which is precisely the claimed convergence rate \eqref{claimed}.
\end{proof}

\subsection{Proof of Theorem \ref{Main theorem}}\label{Sec Pf of Thm}

For the proof of the theorem, let us introduce the function $z^{\eps}$ to be the solution to the problem
\begin{align*}
\left\{\begin{aligned}-A\left(\frac{\cdot}{\eps}\right):D^2 z^{\eps} &= -h & &\text{in }\Omega,\\
z^{\eps} &= 0&  &\text{on }\partial\Omega,\end{aligned}\right.
\end{align*}
with the function $h$ defined by \eqref{h definition}. Observe that the function $z$ given by \eqref{z problem} is precisely the homogenized solution corresponding to $(z^{\eps})_{\eps>0}$. We note that as $f\in W^{3,q}(\Omega)$ for some $q>n$, we have $u\in W^{5,q}(\Omega)$ and hence $h\in W^{2,q}(\Omega)$. Therefore, we can apply Lemma \ref{Lemma rate eps} to find that for any $p\in (1,\infty)$, there holds
\begin{align}\label{zeps to z rate}
\|z^{\eps}- z \|_{W^{1,p}(\Omega)}=\calO(\eps).
\end{align}
We further introduce the functions $\chi^{jkl}$, $j,k,l\in \{1,\dots,n\}$ to be the solutions to the periodic problems
\begin{align}\label{chijkl}
-A:D^2 \chi^{jkl}=Ae_j\cdot\nabla v^{kl}-c_j^{kl}\quad\text{in }Y,\quad \chi^{jkl} \text{ is } Y\text{-periodic},\quad \int_Y \chi^{jkl}=0.
\end{align}
Note that the functions $\chi^{jkl}$ are well-defined as by definition \eqref{cjkl def} of $c_j^{kl}$, the right-hand side integrated against the invariant measure equals zero, i.e., there holds
\begin{align*}
\int_Y \left(Ae_j\cdot\nabla v^{kl}-c_j^{kl}  \right)r = 0.
\end{align*}
We also introduce a corresponding boundary corrector $\theta_{\chi}^{\eps}$ to be the solution to the following problem:
\begin{align}\label{thetaepschi problem}
\left\{\begin{aligned}-A\left(\frac{\cdot}{\eps}\right):D^2 \theta_{\chi}^{\eps} &= 0 & &\text{in }\Omega,\\
\theta_{\chi}^{\eps} &= -\sum_{j,k,l=1}^n\chi^{jkl}\left(\frac{\cdot}{\eps}\right)\partial^3_{jkl} u&  &\text{on }\partial\Omega.\end{aligned}\right.
\end{align}
As we have done for the boundary corrector $\theta^{\eps}$, we can transform the problem \eqref{thetaepschi problem} into the divergence-form problem
\begin{align*}
\left\{\begin{aligned}-\nabla\cdot \left(A^{\mathrm{div}}\left(\frac{\cdot}{\eps}\right)\nabla\theta^{\eps}_{\chi}\right) &= 0& &\text{in }\Omega,\\
\theta^{\eps}_{\chi} &= -\sum_{j,k,l=1}^n\chi^{jkl}\left(\frac{\cdot}{\eps}\right)\partial^3_{jkl} u&  &\text{on }\partial\Omega,\end{aligned}\right.
\end{align*}
with a coefficient $A^{\mathrm{div}}\in C^{0,\alpha}(\mathbbm{T}^n;\R^{n\times n})$ for some $\alpha\in (0,1]$ that is uniformly elliptic. Let us note that since $u\in W^{5,q}(\Omega)$ for some $q>n$ and $\chi^{jkl}\in C^{2,\beta}(\R^n)$ for some $\beta\in (0,1]$ by elliptic regularity theory \cite{GT01}, we can apply Lemma \ref{W1p est for divform} to find the bound
\begin{align}\label{corrector thetaepschi est}
\eps\|\theta_{\chi}^{\eps}\|_{W^{1,p}(\Omega)}\leq C\eps \sum_{j,k,l=1}^n \left\|\chi^{jkl}\left(\frac{\cdot}{\eps}\right)\partial^3_{jkl} u\right\|_{W^{1,\infty}(\Omega)}=\calO(1)
\end{align}
for any $p\in (1,\infty)$. Finally, we introduce the function $w^{\eps}$ to be the solution to the problem
\begin{align}\label{weps problem}
\left\{\begin{aligned}-A\left(\frac{\cdot}{\eps}\right):D^2 w^{\eps} &= \sum_{i,j,k,l=1}^n a_{ij}\left(\frac{\cdot}{\eps}\right) \partial_i v^{kl}\left(\frac{\cdot}{\eps}\right)\partial^3_{jkl} u & &\text{in }\Omega,\\
w^{\eps} &= 0&  &\text{on }\partial\Omega.\end{aligned}\right.
\end{align}
Now we are in a position to prove the main result.

\begin{proof}[Proof of Theorem \ref{Main theorem}]
Let $\phi^{\eps}$ be given by \eqref{phieps} and $w^{\eps}$ be the solution to \eqref{weps problem}. Then we have that the function $u^{\eps}-u-\phi^{\eps}-2\eps w^{\eps}$ satisfies the problem
\begin{align*}
\left\{\begin{aligned}-A\left(\frac{\cdot}{\eps}\right):D^2 (u^{\eps}-u-\phi^{\eps}-2\eps w^{\eps})&=\eps^2 F^{\eps}_3 & &\text{in }\Omega,\\
u^{\eps}-u-\phi^{\eps}-2\eps w^{\eps} &= 0&  &\text{on }\partial\Omega,\end{aligned}\right.
\end{align*}
with $F^{\eps}_3$ given by
\begin{align*}
F^{\eps}_3:=\sum_{i,j,k,l=1}^n a_{ij}\left(\frac{\cdot}{\eps}\right)v^{kl}\left(\frac{\cdot}{\eps}\right)\partial^4_{ijkl}u.
\end{align*}
As $u\in W^{5,q}(\Omega)$ for $q>n$, we have that $F^{\eps}_3$ is uniformly bounded in $L^q(\Omega)$ and hence, by the uniform estimate from Lemma \ref{C1alpha est}, we find 
\begin{align}\label{est1 to combine}
\|u^{\eps}-u-\phi^{\eps}-2\eps w^{\eps}\|_{W^{1,\infty}(\Omega)}\leq C \eps^2 \|F^{\eps}_3\|_{L^q(\Omega)}=\calO(\eps^2).
\end{align}
Now, let us define the function
\begin{align}\label{psieps}
\psi^{\eps} :=  \eps^2\left[ \sum_{j,k,l=1}^n\chi^{jkl}\left(\frac{\cdot}{\eps}\right)\partial^3_{jkl}  u +  \theta^{\eps}_{\chi}\right]
\end{align} 
with $\chi^{jkl}$ given by \eqref{chijkl} and $\theta^{\eps}_{\chi}$ given by \eqref{thetaepschi problem}. Then we have that the function $w^{\eps}+z^{\eps}-\psi^{\eps}$ satisfies the problem
\begin{align*}
\left\{\begin{aligned}-A\left(\frac{\cdot}{\eps}\right):D^2 (w^{\eps}+z^{\eps}-\psi^{\eps})&=\eps F^{\eps}_4 & &\text{in }\Omega,\\
w^{\eps}+z^{\eps}-\psi^{\eps} &= 0&  &\text{on }\partial\Omega,\end{aligned}\right.
\end{align*}
with $F^{\eps}_4$ given by
\begin{align*}
F^{\eps}_4:=\sum_{d,i,j,k,l=1}^n a_{ij}\left(\frac{\cdot}{\eps}\right) \left[2\, \partial_i \chi^{dkl}\left(\frac{\cdot}{\eps}\right)\partial^4_{djkl}u+\eps \chi^{dkl}\left(\frac{\cdot}{\eps}\right)\partial^5_{dijkl}u\right].
\end{align*}
As $u\in W^{5,q}(\Omega)$ for some $q>n$, we have that $F^{\eps}_4$ is uniformly bounded in $L^q(\Omega)$ and hence, by the uniform estimate from Lemma \ref{C1alpha est}, we find
\begin{align}\label{est2 to combine}
\|w^{\eps}+z^{\eps}-\psi^{\eps}\|_{W^{1,\infty}(\Omega)}\leq  C \eps \|F^{\eps}_4\|_{L^q(\Omega)}=\calO(\eps).
\end{align}
Combining the bounds \eqref{est1 to combine} and \eqref{est2 to combine}, we obtain
\begin{align*}
\|u^{\eps}-u+2\eps z^{\eps}-\phi^{\eps}-2\eps \psi^{\eps}\|_{W^{1,\infty}(\Omega)}=\calO(\eps^2),
\end{align*}
and therefore, using the definitions of $\phi^{\eps}$ and $\psi^{\eps}$ from \eqref{phieps} and \eqref{psieps}, we have that
\begin{align}\label{a W1infty estimate}
\left\|u^{\eps}-u+2\eps z^{\eps}-\eps^2 V\left(\frac{\cdot}{\eps}\right):D^2 u-\eps^2\theta^{\eps}-2\eps^3 \theta^{\eps}_{\chi}\right\|_{W^{1,\infty}(\Omega)}=\calO(\eps^2).
\end{align}
Finally, using the rate of convergence of $z^{\eps}$ to $z$ given by \eqref{zeps to z rate}, and Lemma \ref{lemma boundary corrector} and the estimate \eqref{corrector thetaepschi est} to bound the boundary correctors, we conclude that
\begin{align*}
\left\|u^{\eps}-u+2\eps z -\eps^2 V\left(\frac{\cdot}{\eps}\right):D^2 u\right\|_{W^{1,p}(\Omega)}=\calO(\eps^{1+\frac{1}{p}})
\end{align*}
for any $p\in (1,\infty)$.
\end{proof}

\section{Numerical experiments}\label{Sec Num exp}

\subsection{An explicit $c$-bad matrix}\label{Sec expl cbad}

In this section, we prove that the matrix-valued function $A:\R^2\rightarrow \R^{2\times 2}$ given by
\begin{align}\label{cbad A}
A(y):=\frac{1}{r(y)}\begin{pmatrix}
1-\frac{1}{2}\sin(2\pi y_1)\sin(2\pi y_2) & 0\\0 & 1+\frac{1}{2}\sin(2\pi y_1)\sin(2\pi y_2)
\end{pmatrix}
\end{align}
with $r:\R^2\rightarrow\R$ defined by
\begin{align}\label{cbad r}
r(y):= 1+\frac{1}{4}(\cos(2\pi y_1)-2\sin(2\pi y_1))\sin(2\pi y_2)
\end{align}
is $c$-bad (recall the notion of $c$-bad from Corollary \ref{Cor c-good}). We observe the following:

\begin{remark} The function $r:\R^2\rightarrow\R$ given by \eqref{cbad r} is the invariant measure of $A:\R^2\rightarrow \R^{2\times 2}$ given by \eqref{cbad A}. Further note that the problem \eqref{ueps problem} can then be transformed into the divergence-form problem
\begin{align}\label{divform for numerics}
\left\{\begin{aligned}-\nabla\cdot \left(A^{\div}\left(\frac{\cdot}{\eps}\right)\nabla u^{\eps}  \right)&=r\left(\frac{\cdot}{\eps}\right) f& &\text{in }\Omega,\\
u^{\eps}&=0&  &\text{on }\partial\Omega,\end{aligned}\right.
\end{align}
with the matrix-valued function $A^{\div}:\R^n\rightarrow\R^{n\times n}$ given by
\begin{align*}
A^{\div}(y):=\begin{pmatrix}
1-\frac{1}{2}\sin(2\pi y_1)\sin(2\pi y_2) & \frac{1}{2}\cos(2\pi y_1)\cos(2\pi y_2)\\ -\frac{1}{2}\cos(2\pi y_1)\cos(2\pi y_2) & 1+\frac{1}{2}\sin(2\pi y_1)\sin(2\pi y_2)
\end{pmatrix}.
\end{align*}
\end{remark}

We can check that $A$ is $c$-bad by explicitly computing the matrix of corrector functions $V=(v^{ij})_{1\leq i,j\leq 2}$ given by \eqref{vij problem} and computing the values $\{c_j^{kl}\}_{1\leq j,k,l\leq 2}$ given by \eqref{cjkl def}.

\begin{proof}[Proof of Theorem \ref{thm explicit cbad}]
The effective coefficient $\bar{A}\in \calS^2_{+}$ is given by
\begin{align}\label{eff coeff}
\bar{A}=\int_Y Ar = \begin{pmatrix}
1 & 0\\ 0 & 1
\end{pmatrix}
\end{align} 
and it is a straightforward calculation to check that the matrix of corrector functions $V=(v^{ij})_{1\leq 1,j\leq 2}:\R^2\rightarrow\R^{2\times 2}$ is given by 
\begin{align*}
V(y)=-\frac{\sin(2\pi y_2)}{32\pi^2}\begin{pmatrix}
\cos(2\pi y_1) & 0\\0 &  \cos(2\pi y_1)-4\sin(2\pi y_1)
\end{pmatrix}.
\end{align*}
Computation of the values $c_j^{kl}$ for $j,k,l\in\{1,2\}$ given by \eqref{cjkl def} yields that
\begin{align*}
c_1^{11}&=\int_Y r a_{11}\partial_1 v^{11} =-\frac{1}{128\pi}=\int_Y r a_{11}\partial_{1} v^{22}=c_1^{22}
\end{align*}
for the values of $c_1^{11},c_1^{22}$, and that
\begin{align*}
c_2^{11}&=\int_Y r a_{22}\partial_{2} v^{11} =0=\int_Y r a_{22}\partial_{2} v^{22}=c_2^{22}
\end{align*}
for the values of $c_2^{11},c_2^{22}$. Clearly we have that $c_j^{kl}=0$ for any $(j,k,l)\in \{1,2\}^3$ with $k\neq l$.
\end{proof}
Let us note that the effective coefficient \eqref{eff coeff} is the identity matrix and hence, the homogenized problem for this $c$-bad matrix is the Poisson problem
\begin{align}\label{poisson}
\left\{\begin{aligned}-\Delta u &= f&  &\text{in }\Omega,\\
u &= 0&  &\text{on }\partial\Omega.\end{aligned}\right.
\end{align}
Further, we have that the function $z$ defined by \eqref{z problem} is given as the solution to the Poisson problem
\begin{align}\label{z for expl cbad}
\left\{\begin{aligned}-\Delta z &= -\frac{\partial_{1}f}{128\pi}&  &\text{in }\Omega,\\
z &= 0&  &\text{on }\partial\Omega.\end{aligned}\right.
\end{align}

Finally, let us note that the factor $\frac{1}{r}$ in the definition of the $c$-bad matrix \eqref{cbad A} is crucial for $c$-badness. Indeed, removing this factor we obtain a $c$-good matrix: 

\begin{remark}\label{cgood A}
The matrix-valued function $A:\R^2\rightarrow \R^{2\times 2}$ given by
\begin{align*}
A(y):=\begin{pmatrix}
1-\frac{1}{2}\sin(2\pi y_1)\sin(2\pi y_2) & 0\\0 & 1+\frac{1}{2}\sin(2\pi y_1)\sin(2\pi y_2)
\end{pmatrix}
\end{align*}
is $c$-good.
\end{remark} 

\begin{proof}
The invariant measure is the constant function $r\equiv 1$ and hence, the effective coefficient $\bar{A}\in \calS^2_{+}$ is given by
\begin{align*}
\bar{A}=\int_Y A = \begin{pmatrix}
1 & 0\\ 0 & 1
\end{pmatrix}.
\end{align*} 
It is a straightforward calculation to check that the matrix of corrector functions $V=(v^{ij})_{1\leq 1,j\leq 2}:\R^2\rightarrow\R^{2\times 2}$ is given by 
\begin{align*}
V(y)=-\frac{\sin(2\pi y_1)\sin(2\pi y_2)}{16\pi^2}\begin{pmatrix}
1 & 0\\0 & -1
\end{pmatrix}.
\end{align*}
Computation of the values $c_j^{kl}$ given by \eqref{cjkl def} yields $c_j^{kl}=0$ for all $j,k,l\in\{1,2\}$.
\end{proof}

Note that the effective problem for this $c$-good matrix is again the Poisson problem \eqref{poisson}, i.e., the homogenized solution coincides with the one from the $c$-bad problem.

\subsection{Numerical illustration of the $L^{\infty}(\Omega)$ rates}\label{Sec Num ill 1}

We consider the problem \eqref{ueps problem} with the c-bad coefficient matrix $A$ from Theorem \ref{thm explicit cbad}, the domain $\Omega:=(0,1)^2$ and the right-hand side
\begin{align*}
f:\bar{\Omega}\rightarrow \R,\quad f(x_1,x_2):=8\pi^2 \sin(2\pi x_1)\sin(2\pi x_2).
\end{align*}
Then, the solution to the homogenized problem \eqref{poisson} is given by
\begin{align*}
u:\bar{\Omega}\rightarrow\R,\quad u(x_1,x_2)=\sin(2\pi x_1)\sin(2\pi x_2),
\end{align*}
and the solution $z$ to the problem \eqref{z for expl cbad} is given by
\begin{align*}
z:\bar{\Omega}\rightarrow\R,\quad z(x)=\frac{1}{64}\left( \frac{\cosh(2\pi x_1-\pi)}{\cosh(\pi)}-\cos(2\pi x_1)\right)\sin(2\pi x_2).
\end{align*}
Figure \ref{fig 1} illustrates the estimate \eqref{Guo-Tran-Yu} from Remark \ref{rmrk some estimates}, i.e., for several values of $\eps$, we plot
\begin{align}\label{E0infty def}
E_{0,\infty}^{\eps}:=\|u^{\eps}-u+2\eps z\|_{L^{\infty}(\Omega)}
\end{align}
We approximate the solution $u^{\eps}$ to \eqref{ueps problem} with $\mathbbm{P}_1$ finite elements on a fine mesh, based on the natural variational formulation of the divergence-form problem \eqref{divform for numerics}. We observe the rate $E_{0,\infty}^{\eps}=\calO(\eps^2)$ as $\eps$ tends to zero, as expected from Remark \ref{rmrk some estimates}.

\begin{figure}[H]
\centering
\includegraphics[scale=1]{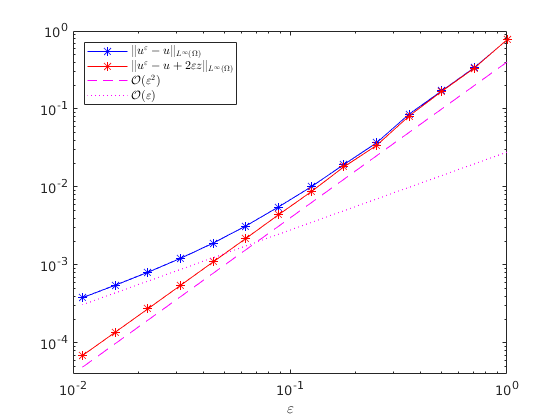}
\caption{\textit{blue}: Plot of $\|u^{\eps}-u\|_{L^{\infty}(\Omega)}$, \textit{red}: Plot of $E_{0,\infty}^{\eps}$ (see \eqref{E0infty def}). We observe $\|u^{\eps}-u\|_{L^{\infty}(\Omega)}=\calO(\eps)$ and $E_{0,\infty}^{\eps}=\calO(\eps^2)$ as expected from Remark \ref{rmrk some estimates}.}
\label{fig 1}
\end{figure}

\subsection{Numerical illustration of the $W^{1,p}(\Omega)$ rates}\label{Sec Num ill 2}

We consider the problem \eqref{ueps problem} with the c-bad coefficient matrix $A$ from Theorem \ref{thm explicit cbad}, the domain $\Omega:=(0,1)^2$ and the right-hand side
\begin{align}\label{f for num}
f:\bar{\Omega}\rightarrow \R,\quad f(x_1,x_2):=x_1(1-x_1)+x_2(1-x_2).
\end{align}
Then, the solution of the homogenized problem \eqref{poisson} is given by
\begin{align}\label{u for num}
u:\bar{\Omega}\rightarrow\R,\quad u(x_1,x_2)=\frac{1}{2}x_1(1-x_1)x_2(1-x_2).
\end{align}
Figure \ref{fig 2} illustrates the estimate \eqref{gradestimate} from Remark \ref{rmrk some estimates}, i.e., for several values of $\eps$, we plot
\begin{align}\label{E1p def}
E_{1,p}^{\eps}:=\left\|\nabla u^{\eps}-\nabla u +2\eps\nabla z - \eps \sum_{i,j=1}^n \nabla v^{ij}\left(\frac{\cdot}{\eps}\right)\partial^2_{ij} u  \right\|_{L^{p}(\Omega)}
\end{align}
for the values $p=2,3,4,5$. We approximate the solution $u^{\eps}$ to \eqref{ueps problem} and the solution $z$ to \eqref{z for expl cbad} with $\mathbbm{P}_2$ finite elements on a fine mesh, based on the natural variational formulation of the divergence-form problems \eqref{divform for numerics} and \eqref{z for expl cbad}. We observe the rate $E_{1,p}^{\eps}=\calO(\eps^{1+\frac{1}{p}})$ as $\eps$ tends to zero, as expected from Remark \ref{rmrk some estimates}.

\begin{figure}[H]
\centering
\includegraphics[scale=1]{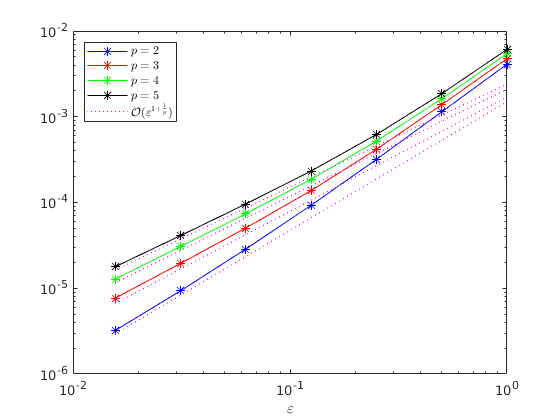}
\caption{Plot of $E_{1,p}^{\eps}$ (see \eqref{E1p def}) for $p=2,3,4,5$. We observe the rate $E_{1,p}^{\eps}=\calO(\eps^{1+\frac{1}{p}})$ as expected from Remark \ref{rmrk some estimates}.}
\label{fig 2}
\end{figure}

\subsection{Numerical illustration of the $W^{2,p}(\Omega)$ rates}\label{Sec Num ill 3}

We consider the problem \eqref{ueps problem} with the c-bad coefficient matrix $A$ from Theorem \ref{thm explicit cbad}, the domain $\Omega:=(0,1)^2$ and $f$ given by \eqref{f for num}. As before, the homogenized solution is given by \eqref{u for num}. Figure \ref{fig 3} illustrates the estimate \eqref{hessianestimate} from Remark \ref{rmrk some estimates}, i.e., for several values of $\eps$, we plot
\begin{align}\label{E2p def}
E_{2,p}^{\eps}:=\left\|D^2 u^{\eps}- D^2 u -\sum_{i,j=1}^n  D^2 v^{ij}\left(\frac{\cdot}{\eps}\right)\partial^2_{ij}u  \right\|_{L^p(\Omega)}
\end{align}
for the values $p=2,3,4,5$. We approximate the solution $u^{\eps}$ to \eqref{ueps problem} with an $H^2$ conforming finite element method on a fine mesh, using the HCT element in FreeFem\texttt{++} \cite{Hec12}. We multiply the equation \eqref{ueps problem} by the invariant measure and use the variational formulation from the framework of linear nondivergence-form equations with Cordes coefficients (see \cite{SS13}): The solution $u^{\eps}$ to \eqref{ueps problem} is the unique function in $H:=H^2(\Omega)\cap H^1_0(\Omega)$ such that there holds
\begin{align*}
\int_{\Omega} \frac{\mathrm{tr}\left(\left[rA\right]\left(\frac{\cdot}{\eps}\right)\right)}{\left\lvert \left[rA\right]\left(\frac{\cdot}{\eps}\right)\right\rvert^2} \left(-\left[rA\right]\left(\frac{\cdot}{\eps}\right):D^2 u^{\eps}\right)\Delta v = \int_{\Omega}\frac{\mathrm{tr}\left(\left[rA\right]\left(\frac{\cdot}{\eps}\right)\right)}{\left\lvert \left[rA\right]\left(\frac{\cdot}{\eps}\right)\right\rvert^2}\, r\left(\frac{\cdot}{\eps}\right)f\,\Delta v
\end{align*}
for any $v\in H$. We observe the rate $E_{2,p}^{\eps}=\calO(\eps^{\frac{1}{p}})$ as $\eps$ tends to zero, as expected from Remark \ref{rmrk some estimates}.
  
\begin{figure}[H]
\centering
\includegraphics[scale=1]{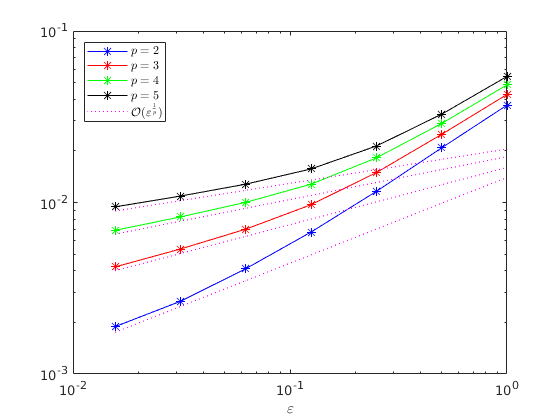}
\caption{Plot of $E_{2,p}^{\eps}$ (see \eqref{E2p def}) for $p=2,3,4,5$. We observe the rate $E_{2,p}^{\eps}=\calO(\eps^{\frac{1}{p}})$ as expected from Remark \ref{rmrk some estimates}.}
\label{fig 3}
\end{figure}

\subsection{Comparison of $c$-bad and $c$-good problems}\label{Sec Num ill 4}

We refer to the problem \eqref{ueps problem} with the $c$-bad coefficient matrix from Theorem \ref{thm explicit cbad} as the $c$-bad problem and to the problem \eqref{ueps problem} with the $c$-good coefficient matrix from Remark \ref{cgood A} as the $c$-good problem. We perform experiments for these two problems with two different choices of right-hand sides, one with known homogenized solution $u$ and one with unknown homogenized solution $u$. All experiments are performed on the domain $\Omega:=(0,1)^2$.

Let us recall that the homogenized problems corresponding to the $c$-bad and the $c$-good problem coincide and that the homogenized solution $u$ is the solution to the Poisson problem \eqref{poisson}.

\subsubsection{$c$-bad and $c$-good problems with known (common) homogenized function $u$}

We consider the right-hand side $f$ given by \eqref{f for num}. Then, the solution $u$ of the homogenized problem is known and given by \eqref{u for num}. 

Figure \ref{fig 4} illustrates the $L^{\infty}$ convergence rate $\calO(\eps)$ for the $c$-bad problem and the convergence rate $\calO(\eps^2)$ for the $c$-good problem. We also illustrate the corrected $L^{\infty}$ bound $E_{0,\infty}^{\eps}=\calO(\eps^2)$ for the $c$-bad problem. We approximate the solution $u^{\eps}$ to \eqref{ueps problem} and the solution $z$ to \eqref{z for expl cbad} with $\mathbbm{P}_2$ finite elements on a fine mesh, based on the natural variational formulation of the divergence-form problems \eqref{divform for numerics} (note $r\equiv 1$ for the $c$-good problem) and \eqref{z for expl cbad}.

\begin{figure}[H]
\setlength{\abovecaptionskip}{17.5 pt plus 4pt minus 2pt}
\mbox{
\subfigure{\includegraphics[scale=.49]{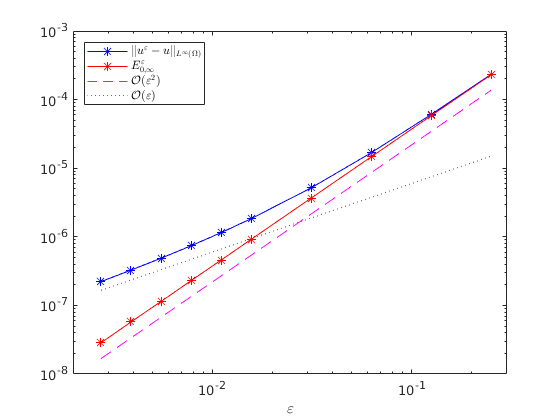}}\,
\subfigure{\includegraphics[scale=.49]{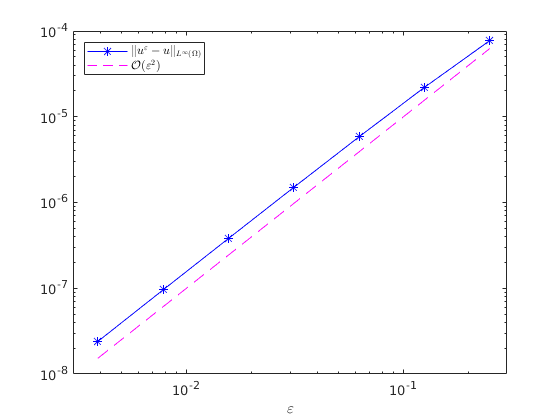}}
}
\vspace{-0.6cm}
\caption{Illustration of the $L^{\infty}$-rates $\|u^{\eps}-u\|_{L^{\infty}(\Omega)}=\calO(\eps)$ and $E_{0,\infty}^{\eps}=\calO(\eps^2)$ for the $c$-bad problem (left), and $\|u^{\eps}-u\|_{L^{\infty}(\Omega)}=\calO(\eps^2)$ for the $c$-good problem (right) with the right-hand side \eqref{f for num}.}
\label{fig 4}
\end{figure}

\subsubsection{$c$-bad and $c$-good problems with unknown (common) homogenized function $u$}

We consider the right-hand side $f$ given by
\begin{align}\label{f for num 2}
f:\bar{\Omega}\rightarrow \R,\quad f(x):=x_1^3 (1-x_1)^3 \sin(2\pi (x_1-2x_2)).
\end{align}
Let us note that we do not know the homogenized solution $u$ exactly, we have however that $u\in H^6(\Omega)\cap H^1_0(\Omega)$ as the right-hand side $f\in H^4(\Omega)$ satisfies the compatibility conditions $f=0$ and $\partial_1^2 f - \partial_2^2 f = 0$ at the corners of the square $(0,1)^2=\Omega$; see \cite{HO14}.

Figure \ref{fig 5} illustrates the $L^{\infty}$ convergence rate $\calO(\eps)$ for the $c$-bad problem and the convergence rate $\calO(\eps^2)$ for the $c$-good problem. We also illustrate the corrected $L^{\infty}$ bound $E_{0,\infty}^{\eps}=\calO(\eps^2)$ for the $c$-bad problem. We approximate the functions $u^{\eps}$, $u$ and $z$ with $\mathbbm{P}_2$ finite elements as before.

\begin{figure}[H]
\setlength{\abovecaptionskip}{17.5 pt plus 4pt minus 2pt}
\mbox{
\subfigure{\includegraphics[scale=.49]{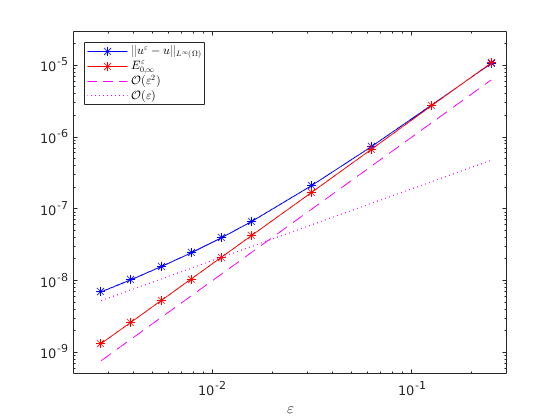}}\,
\subfigure{\includegraphics[scale=.49]{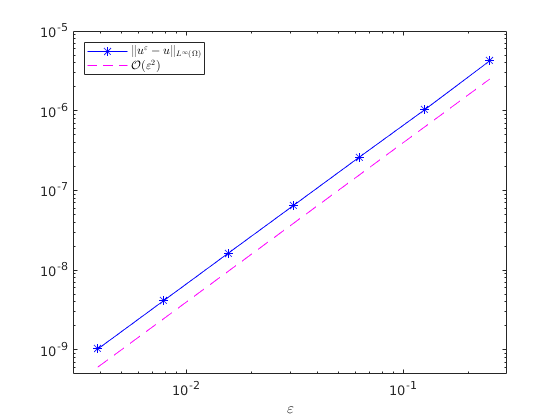}}
}
\vspace{-0.6cm}
\caption{Illustration of the $L^{\infty}$-rates $\|u^{\eps}-u\|_{L^{\infty}(\Omega)}=\calO(\eps)$ and $E_{0,\infty}^{\eps}=\calO(\eps^2)$ for the $c$-bad problem (left), and $\|u^{\eps}-u\|_{L^{\infty}(\Omega)}=\calO(\eps^2)$ for the $c$-good problem (right) with the right-hand side \eqref{f for num 2}.}
\label{fig 5}
\end{figure}

\section{Extensions and concluding remarks}\label{Sec Extensions and concl}

\subsection{Nonsmooth domains}

The smoothness assumption on the domain $\Omega$ is used to deduce regularity of $u$ from the regularity assumption on $f$, and it ensures that the uniform estimates from Section \ref{subsec uniform est} hold. We briefly discuss extensions to nonsmooth domains.

\subsubsection{$C^{2,\gamma}$ domains}
We note that the uniform estimates from Section \ref{subsec uniform est} still hold for bounded $C^{2,\gamma}$ domains. Therefore, the result of Theorem \ref{Main theorem} remains valid with the additional assumption $u\in W^{5,q}(\Omega)$ for some $q>n$, which has previously been deduced from the regularity of the right-hand side $f$. 

\subsubsection{Convex domains}
We would like to briefly discuss the case of convex domains. Let $\Omega\subset\R^n$ be a bounded convex domain in dimension $n\geq 2$ and assume that the homogenized solution is of regularity $u\in W^{5,q}(\Omega)$ for some $q>n$. Let us further assume that the coefficient is of regularity $A\in C^{0,\alpha}(\mathbbm{T}^n;\calS^n_{+})$ for some $\alpha\in (0,1]$ and satisfies the Cordes condition (which dates back to \cite{Cor56}), i.e., that there exists a constant $\delta\in (0,1]$ such that there holds
\begin{align}\label{Cordes}
\frac{\lvert A\rvert^2}{(\mathrm{tr} A)^2}\leq \frac{1}{n-1+\delta}\quad\text{in }\R^n.
\end{align}
Let us note that the Cordes condition \eqref{Cordes} is a consequence of uniform ellipticity in two dimensions, i.e., \eqref{Cordes} holds for any $A\in C^{0,\alpha}(\mathbbm{T}^2;\calS^2_{+})$. Let us also note that Theorem \ref{Capdeboscq-Sprekeler-Suli} holds in this situation for $p=2$; see \cite{CSS20}.

In the situation described above, there exists a unique solution $u^{\eps}\in H^2(\Omega)\cap H^1_0(\Omega)$ to \eqref{ueps problem} and we have a uniform $H^2$ estimate \cite[Theorem 2.5]{CSS20}. Therefore, by the Sobolev embedding, we have the uniform $W^{1,p}$ estimate 
\begin{align*}
\|u^{\eps}\|_{W^{1,p}(\Omega)}\leq C \|u^{\eps}\|_{H^2(\Omega)}\leq C  \|f\|_{L^2(\Omega)}
\end{align*}
for any $p<2^*$ with constants independent of $\eps$. Here, we write $2^*:=\frac{2n}{n-2}$ to denote the critical Sobolev exponent (with the convention that $2^*:=\infty$ if $n=2$). This uniform estimate replaces the need for the uniform $C^{1,\alpha}$ estimate from Lemma \ref{C1alpha est}.

Finally, in order to estimate the boundary corrector, we transformed the problem \eqref{thetaeps problem} into divergence-form and used that for problems of the form 
\begin{align*}
\left\{\begin{aligned}-\nabla\cdot \left(A^{\mathrm{div}}\left(\frac{\cdot}{\eps}\right)\nabla \rho^{\eps}\right) &= -\nabla\cdot F& &\text{in }\Omega,\\
\rho^{\eps} &= g&  &\text{on }\partial\Omega,\end{aligned}\right.
\end{align*}
we have (Lemma \ref{W1p est for divform}) the uniform $W^{1,p}$ estimate 
\begin{align}\label{uniform w1p smooth}
\|\rho^{\eps}\|_{W^{1,p}(\Omega)}\leq C\left(\|F\|_{L^p(\Omega)}+\|g\|_{W^{1,p}(\Omega)}\right)
\end{align}
with a constant $C>0$ independent of $\eps$, assuming that $A^{\mathrm{div}}\in C^{0,\alpha}(\mathbbm{T}^n;\R^{n\times n})$ for some $\alpha\in (0,1]$ is uniformly elliptic and that $\Omega$ is sufficiently smooth. 

Now as $\Omega$ is merely assumed to be convex, we still have \eqref{uniform w1p smooth} for $p=2$ by standard arguments and hence, we find that the result of Theorem \ref{Main theorem} remains true for $p=2$ under the assumptions made in this section. Uniform $W^{1,p}$ estimates for divergence-form problems for a wider range of values $p$ require a more sophisticated approach. With a symmetry assumption on $A^{\mathrm{div}}$, uniform $W^{1,p}$ estimates for divergence-form problems on Lipschitz domains (recall that bounded convex domains are Lipschitz \cite{Gri11}) have been obtained in \cite{She08} for values of $p$ in a certain range around $p=2$.

\subsection{Interpolation}\label{Sec: interpolation}

Let us revisit Remark \ref{rmrk some estimates} and note that the gradient bound \eqref{gradestimate} follows from the $L^{\infty}$ bound \eqref{Guo-Tran-Yu} and the Hessian bound \eqref{hessianestimate} via the Gagliardo--Nirenberg interpolation inequality \cite{Nir59} applied to the function
\begin{align*}
\fhi^{\eps}:=u^{\eps}- u +2\eps z - \eps^2\,  V\left(\frac{\cdot}{\eps}\right):D^2 u .
\end{align*}
Indeed, let us assume that $\|\fhi^{\eps}\|_{L^{\infty}(\Omega)}=\calO(\eps^2)$ and $\|D^2 \fhi^{\eps}\|_{L^p(\Omega)}=\calO(\eps^{\frac{1}{p}})$ for any $p\in (1,\infty)$. Then the Gagliardo--Nirenberg inequality yields
\begin{align*}
\|\nabla \fhi^{\eps}\|_{L^p(\Omega)}\leq C \left( \|D^2 \fhi^{\eps}\|_{L^{\frac{p}{2}}(\Omega)}^{\frac{1}{2}} \|\fhi^{\eps}\|_{L^{\infty}(\Omega)}^{\frac{1}{2}} + \|\fhi^{\eps}\|_{L^{\infty}(\Omega)}\right) = \calO(\eps^{1+\frac{1}{p}})
\end{align*}
for any $p\in (2,\infty)$. 
This shows once again that the optimality of the bounds \eqref{Guo-Tran-Yu}--\eqref{hessianestimate} is natural.
We conclude this paper with a review of the main results.

\subsection{Conclusion}

In this paper we derived optimal rates of convergence in the periodic homogenization of linear elliptic equations in nondivergence-form. 
As a result of a $W^{1,p}$ corrector estimate, we obtained that the optimal rate of convergence of $u^{\eps}$ to the homogenized solution in the $W^{1,p}$-norm is $\calO(\eps)$ and also recovered that the optimal convergence rate in the $L^{\infty}$-norm is $\calO(\eps)$. 
Moreover, we obtained optimal estimates for the gradient and the Hessian of the solution with correction terms taken into account in $L^p$-norm. 

In the final part of the paper, we provided an example of an explicit $c$-bad matrix and presented several numerical experiments matching the theoretical results and illustrating the optimality of the obtained rates.

\addtocontents{toc}{\protect\setcounter{tocdepth}{0}}

\section*{Acknowledgements}

The authors thank Professor Nam Le (Indiana University Bloomington) for the suggestion of adding Section \ref{Sec: interpolation}. TS is supported by the UK Engineering and Physical Sciences Research Council [EP/L015811/1]. HT is supported in part by NSF grant DMS-1664424 and NSF CAREER grant DMS-1843320.

\bibliographystyle{plain}
\bibliography{ref}

\end{document}